\setlist{
  listparindent=3em,
  itemindent=\parindent,
  parsep=0pt,
  leftmargin=\parindent
}
\newlist{nested}{enumerate}{5}
\setlist[nested]{
  nosep,    
  noitemsep,
  listparindent=2\parindent,
    leftmargin=2\parindent,
  parsep=0pt
}
\newtheorem{theorem}{Theorem}
 \newtheorem{cor}[theorem]{Corollary}
 \newtheorem{prop}[theorem]{Proposition}
\newcounter{cases}
\newcounter{subcases}[cases]
\def\F{\mathbb{F}}
\def\Z{\mathbb{Z}}
\newcommand{\ie} [1] {\textit{i.e.,} #1}
\newcommand{\ceil}[1]{\lceil #1\rceil}
\newcommand{\floor}[1]{\lfloor #1\rfloor}
\def\id{\varepsilon}
\begin{document}

\date{}

\title{Bounds for Permutation Arrays under Kendall Tau Metric$^*$} 

\author{Sergey Bereg
\and William Bumpass
\and Mohammadreza Haghpanah
\and Brian Malouf
\and I. Hal Sudborough
}


\maketitle

\begin{abstract}Permutation arrays under the Kendall-$\tau$ metric have been considered for error-correcting codes.
Given $n$ and $d\in [1..\binom{n}{2}]$, the task is to find a large permutation array of permutations on $n$ 
symbols with pairwise Kendall-$\tau$ distance at least $d$. 
Let $P(n,d)$ denote the maximum size of any permutation array of permutations on $n$ symbols with pairwise Kendall-$\tau$ distance $d$. New algorithms and several theorems are presented, giving improved lower bounds for $P(n,d)$. Also, $(n,m,d)$-arrays are defined, which are permutation arrays on n symbols with Kendall-$\tau$ distance d, with the restriction that symbols {1...(n-m)} appear in increasing order. Let $P(n,m,d)$ denote the maximum size of any $(n,m,d)$-array.
For example, (n,m,d)-arrays are useful for recursively computing lower bounds for $P(n,d)$. Lower and upper bounds are given for $P(n.m,d)$.
\end{abstract}


\section{Introduction}
\label{sec:intro}

In \cite{Abdollahi22,bm-cpec-10,buzaglo,Jiang,Vijayakumaran,wang17},  permutation arrays under the Kendall-$\tau$ metric were studied. This complemented many studies of permutation arrays under other metrics, such as the Hamming metric  \cite{bls-18} \cite{bmmms-19}
\cite{chu2004}, Chebyshev metric \cite{Klove10}  and several others \cite{Deza2}.
The use of the Kendall-$\tau$ metric was motivated by applications of error correcting codes and rank modulation in flash memories \cite{Jiang}. 

Let $[1..n]=\{1,2,\dots,n\}$ and let $S_n$ denote the set of all permutations over $[1..n]$.
Let $\sigma$ and $\pi$ be two permutations (or strings\footnote{ If $\sigma$ and $\tau$ are strings then every symbol must appear the same number of times in them.}) over an alphabet $\Sigma \subseteq [1..n]$.
The {\em Kendall-$\tau$ distance} between $\sigma$ and $\pi$, denoted by $d(\sigma,\pi)$,
is the minimum number of adjacent transpositions (bubble sort operations) required to transform $\sigma$ into $\pi$. For an array (set) $A$ of permutations (strings), the pairwise Kendall-$\tau$ distance of $A$, denoted by $d(A)$, is $\min \{ d(\sigma,\pi)~ |~ \sigma, \pi \in A , \sigma \ne \pi \}$. 
An array $A$ of permutations on $[1..n]$ with $d(A)=d$ will be called a $(n,d)$-PA or simply an {\em $(n,d)$-array}. 
Let $P(n,d)$ denote the maximum cardinality of any $(n,d)$-PA $A$. 

Vijayakumaran \cite{Vijayakumaran} showed several lower bounds for $P(5,d)$ and $P(6,d)$ using integer linear programming. 
Buzaglo and Etzion \cite{buzaglo} showed many new bounds, including $P(7,3) \ge 588$ using two permutation representatives and a set of permutations generated by specific automorphism operations.
We also show results using automorphisms, namely those given in Table \ref{Automorphism}. Details of these automorphisms are given in Section 4 with additional details in an appendix.

We also used other programs to compute good lower bounds:
\begin{enumerate}
\item 
{\em Maximum Clique}. 
Let $G_{n,d}=(S_n,E)$ be a graph where two nodes are connected by an edge if the 
corresponding permutations are at Kendall-$\tau$ distance at least $d$. 
A clique in $G_{n,d}$ is an $(n,d)$-array. Compute $P(n,d)$ as the maximum size of a clique in $G_{n,d}$.
\item
{\em Random Greedy}. 
Choose a random $(n,d)$-array of small size (1-5). Proceed through all remaining permutations in lexicographic order and add them to the set if they have Kendall-$\tau$ distance at least $d$. 
\end{enumerate}

In Tables \ref{Automorphism}
and \ref{Random}
are given sporadic results obtained by these techniques. Blank positions in our tables signify other papers have the best lower bounds known {\it e.g.} \cite{buzaglo}, \cite{Vijayakumaran}. All other lower bounds we give are larger than the previous lower bounds, except for the two noted in Table \ref{Automorphism}.

\begin{table}[htb]
\centering
\begin{tabular}{|c|c|c|c|c|c|c|c|}
    \hline
    n:d & 3 &4&5&6&7&8&9\\ 
    \hline    \hline
    6& $102^{(*)}$&&&&&&\\
    7& $588^{(*)}$ &336&126&84&42&&\\
    8&3,752&2,240&672&448&168&&\\
    9&&&&&1,008&&288\\
    \hline
\end{tabular}
    \vspace{5pt}
    \caption{Improved lower bounds on $P(n,d)$ by automorphisms.
    (The bounds for $P(6,3)$ and $P(7,3)$ are from \cite{Vijayakumaran} and \cite{buzaglo}, respectively.)}
    \label{Automorphism}
\end{table}

\begin{table}[htb]
\centering
\begin{tabular}{|c|c|c|c|c|c|c|c|}
\hline
n:d & 3 &4&5&6&7&8&9\\ 
\hline    \hline
8&&&&&&115&57\\
9&26,831&15,492&3,882&2,497&see Table 1&608&see Table 1\\
10&233,421&133,251&29,145&18,344&5,629&3,832&1,489\\
11&&&247,014&153,260&42,013&28,008&9,747\\
\hline
\end{tabular}

\vspace{5mm}

\begin{tabular}{|c|c|c|c|c|c|c|c|c|c|}
\hline
n:d & 10&11&12&13&14&15&16&17&18|\\ 
\hline    \hline
7&13&8&7&4&&&&&\\
8&43&26&21&15&12&8&&&\\
9&195&101&79&46&37&24&19&15&15\\
10&1,070&491&371&196&153&90&71&&\\
11&6,890&2,861&2,108&1,007&773&415&329&191&131\\
12&50,649&19,227&13,935&6,087&4,564&2,250&1,730&936&740\\
\hline
\end{tabular}
\vspace{5pt}
\caption{Improved lower bounds by random Greedy.
    }
\label{Random}
\end{table}

In \cite{bm-cpec-10}  Barg and Mazumdar described their Theorem 4.5, which is given below:

\begin{theorem}
\label{Barg} 
\cite{bm-cpec-10} Let $m = ((n - 2)^{t+1} - 1)/(n - 3)$, where $n - 2$ is a prime power.  
 Then $$P(n,2t+1) \ge \frac{n!}{t(t+1)m}.$$
\end{theorem}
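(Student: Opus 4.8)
\medskip
\noindent\textbf{Proof idea.} The plan is to push the problem off the symmetric group into a Lee-/$\ell_1$-type coding problem over a mixed-radix box, and then build a dense code there using the field $\F_{n-2}$. For $\sigma\in S_n$ and $k\in\{1,\dots,n\}$, let $c_k(\sigma)$ be the number of symbols $j<k$ lying to the right of $k$ in the one-line notation of $\sigma$; then $0\le c_k(\sigma)\le k-1$, and $\sigma\mapsto c(\sigma)=(c_1(\sigma),\dots,c_n(\sigma))$ is the classical inversion-table bijection from $S_n$ onto $B=\{0\}\times\{0,1\}\times\dots\times\{0,\dots,n-1\}$, so $|B|=n!$. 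The first observation I would record is that a single adjacent transposition changes exactly one coordinate of $c(\sigma)$, and by exactly $\pm1$; equivalently, the Cayley graph of $S_n$ on the adjacent-transposition generators is a spanning subgraph of the $\ell_1$-grid on $B$. Hence $d(\sigma,\pi)\ge\|c(\sigma)-c(\pi)\|_1$ for all $\sigma,\pi$, and it suffices to produce a set $C\subseteq B$ with $|C|\ge n!/(t(t+1)m)$ in which any two distinct points are at $\ell_1$-distance at least $2t+1$; its preimage under the bijection is then an $(n,2t+1)$-PA of the same size.

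The second step is to build such a $C$, and this is where the hypothesis that $q=n-2$ is a prime power enters. I would cut $B$ out by an algebraic system of parity checks built from the $\F_q$-structure --- concretely, one congruence modulo $M=t(t+1)m$ (or a short system of checks) with carefully chosen coordinate weights --- designed so that no nonzero integer vector of $\ell_1$-weight at most $2t$, i.e.\ no error pattern a distance-$(2t+1)$ code must exclude, satisfies the checks. The residue classes then partition $B$ into at most $M$ blocks; the largest block has at least $n!/M$ members, and by the choice of weights any two members of a block differ by such an excluded pattern, hence lie at $\ell_1$-distance at least $2t+1$. The shape $m=1+q+q^2+\dots+q^t$ should appear as the length of the underlying $q$-ary Hamming/BCH-type object carrying the checks, and the extra factor $t(t+1)$ should bookkeep the bounded-magnitude nature of Kendall errors once they are read off in the factorial base. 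Pulling the surviving block back gives at least $n!/(t(t+1)m)$ permutations at pairwise Kendall-$\tau$ distance $\ge 2t+1$.

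I expect the hard part to be this second step: arranging the $\F_q$-algebraic checks so that every short error pattern is genuinely excluded, so that the very small leading radices of $B$ do not spoil the count, and so that the redundancy is exactly $t(t+1)m$ rather than some cruder polynomial in $n$. A naive version of the argument --- bundle the coordinates of $B$ into blocks of comparable radix-product to mimic a uniform $\F_q$-alphabet and apply a Reed--Solomon-type code of distance $2t+1$ --- already yields a bound of the form $P(n,2t+1)\ge n!/\Theta(n^{2t})$, so the real content of the theorem lies in the sharper algebraic construction that drives the denominator down to $\Theta(n^{t})$, a BCH-type code being the natural candidate. Step 1, by contrast, is just the elementary domination of the $\ell_1$-metric of inversion tables by the Kendall-$\tau$ metric, and the concluding count is immediate.
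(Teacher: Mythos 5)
First, a point of order: the paper does not prove Theorem~\ref{Barg} at all --- it is quoted from \cite{bm-cpec-10} --- so there is no in-paper proof to compare against, and your attempt has to be measured against the actual Barg--Mazumdar argument. Your Step~1 is correct and essentially complete: the inversion-table bijection onto $B=\{0\}\times\{0,1\}\times\cdots\times\{0,\dots,n-1\}$, the observation that an adjacent transposition changes exactly one coordinate of $c(\sigma)$ by $\pm 1$ (hence $d(\sigma,\pi)\ge\|c(\sigma)-c(\pi)\|_1$), and the resulting reduction to an $\ell_1$-distance-$(2t+1)$ code in $B$ plus a pigeonhole count are exactly how the reduction goes.

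The genuine gap is that your Step~2 --- the only place where the hypothesis that $n-2$ is a prime power and the specific quantities $m=(q^{t+1}-1)/(q-1)$ and $t(t+1)$ can enter --- is asserted rather than carried out. You posit a congruence modulo $M=t(t+1)m$ with ``carefully chosen coordinate weights'' such that no nonzero difference of $\ell_1$-weight at most $2t$ satisfies it, but exhibiting such weights with redundancy exactly $M$ \emph{is} the theorem. The missing ingredient is the Bose--Chowla theorem: for $q=n-2$ a prime power there is a Sidon set of order $t$ (a $B_t$ set) of size $q+1=n-1$ in $\Z_m$ with $m=(q^{t+1}-1)/(q-1)$, and its elements serve as the weights on coordinates $2,\dots,n$ of the inversion vector. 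Note that a $B_t$ set only separates sums of $t$ elements on each side, whereas a difference vector of $\ell_1$-weight at most $2t$ may split its weight unevenly between positive and negative parts (e.g.\ $2t$ versus $0$); the factor $t(t+1)$ in the modulus is precisely the extra bookkeeping that eliminates these unbalanced patterns, not (as you guess) an artifact of bounded coordinate magnitudes. Likewise, your reading of $m$ as the length of a $q$-ary Hamming/BCH code points at the right numerology, but the construction is additive-combinatorial ($B_t$ sets in $\Z_m$) rather than a parity-check matrix over $\F_q$. In short: correct architecture and a sound, checkable first half, but the engine of the proof is absent.
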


This was improved by Wang,  Zhang, Yang, and Ge in \cite{wang17}. 

\begin{theorem}
\label{Ge}
\cite{wang17} 
Let $m = ((n - 2)^{t+1}  -
  1)/(n - 3)$, where $n - 2$ is a prime power.  
Then 
\begin{equation}
\label{example2}
P(n,2t+1) \ge \frac{n!}{(2t+1)m}.                    
\end{equation}
\end{theorem}

For example, by choosing $t=1$ and $n=11$, one obtains, by Theorem \ref{Ge}, $P(11,3) \ge 1{,}330{,}560.$
Theorem \ref{Ge} applies only when $n$ is two greater than the power of a prime. 
To compute good lower bounds for $P(n,d)$ when $n$ is not two greater than the power of a prime, one needs other techniques (such as those given in Theorems \ref{Even}, \ref{Jiang2}, \ref{Jiang}, \ref{P_n_2_3}, \ref{improved}, \ref{gen}, and \ref{better}).
The lower bounds given by Theorem \ref{Ge} are close to corresponding upper bounds when t is small, but not so close when t is large, say larger than $n$). For example, observe that, in the right side of the inequality (1), $\frac{n!}{(2t+1)m} < 1$, when $t \ge n$, because $m = ((n - 2)^{t+1}  -
  1)/(n - 3)$, and $(n - 2)^{t+1} > n!$ when $t>n \ge 4$. On the other hand, $P(2^i \cdot 6,2^i \cdot 15) \ge 2^{2^i}$, for every integer $i \ge 0$. This follows from Theorem \ref{double}. 

\begin{theorem}
\label{double}
For all $n \ge 3$ and all $d \ge 2$, $P(2n,2d) \ge P(n,d)^2$. 
\end{theorem}

\begin{proof}
Let $A$ be a $(n,d)$-array of permutations on $[1..n]$. 
Let $O$ be the function from $[1..n]$ to $[1..2n]$ defined by $O(i) = 2i-1$, for all $i$, and let $E$ be the function from $[1..n]$ to $[1..2n]$ defined by $E(i) = 2i$, for all $i$. 
By a slight abuse of notation, let $O$ and $E$ also denote the corresponding string homomorphisms. 
Create the array $C = \{ c_1 c_2 ... c_{2n} = (O(\sigma),E(\tau)) ~|~ \sigma = a_1 a_2 ... a_n, \tau = b_1 b_2 ... b_n\in A$ and $c_1 c_3 ... c_{2n-1} = O(\sigma)$, and $c_2 c_4 ... c_{2n}= E(\tau)\}$.
Set $C$ is a $(2n,2d)$-array with $|A|^2$ elements. 
That is, for any permutation in C, odd (and only odd) index positions have an odd numbered symbol. 
 For any adjacent symbols, say $a_i$ and $a_{i+1}$ in $\sigma$ it takes at least two adjacent transpositions to exchange the corresponding symbols, $O(a_i)$ and $O(a_{i+1})$ in $(O(\sigma),E(\tau))$, for any $\tau$, due to the even numbered symbol between them. Changing the order of the even numbered symbols doesn't make it easier to rearrange the odd numbered symbols. 
 The argument is similar to the rearrangement of the even numbered symbols.
\end{proof}

Observe $P(6,15) = 2$, as a set with a permutation $\pi$ on six symbols and the reversal of $\pi$ has Kendall-$\tau$ distance 15. So, $P(2^i \cdot 6,2^i \cdot 15) \ge 2^{2^i}$, for all integer $i \ge 0$, follows from Theorem \ref{double}. As the lower bounds given by Theorem \ref{Ge}, in these cases, is less than one, Theorem \ref{Ge} can be significantly improved when t is large. Several additional examples of such improved lower bounds are shown in Tables \ref{better2a} and \ref{better2b}. Note that the recursive technique given in Theorem \ref{Jiang} below also doesn't provide good lower bounds in such cases, as $d > n$ implies $\ceil{\frac{n+1}{d}} = 1$.

\begin{cor}
\label{doublesquare}
For all $n \ge 3$ and $d \ge 2$ with $d \le n$, $P(2n,2d) \ge 2 \cdot P(n,d)^2$. 
\end{cor}

\begin{proof}
The proof is similar to that of Theorem \ref{double}. Create the array $D = \{ c_1 c_2 ... c_{2n} = (O(\sigma),E(\tau)),$

\noindent $d_1 d_2 ... d_{2n} = (E(\sigma),O(\tau)) ~|~ \sigma = a_1 a_2 ... a_n,~ \tau = b_1 b_2 ... b_n\in A$ and $c_1 c_3 ... c_{2n-1} = O(\sigma)$, and $c_2 c_4 ... c_{2n}= E(\tau),
d_1 d_3 ... d_{2n-1} = E(\sigma)$, and $d_2 d_4 ... d_{2n}= O(\tau)\}$.
As it takes $2n$ adjacent transpositions to transform an alternating odd-even string into an alternating even-odd string, or vice-versa, $D$ is a $(2n,2d)$-array with $2 \cdot |A|^2$ elements, when $d \le n$. 
\end{proof}

For example, we see (from Table \ref{better2b}),  $P(13,13) \ge 14,158$. 
By Corollary \ref{doublesquare},
$P(26,26) \ge 400,897,928$. By Theorem \ref{Ge},
$P(26,26) \ge 16,959,621$. Again, using Corollary \ref{doublesquare} gives a better lower bound.

\begin{cor} \label{cor5}
For all $n \ge 3$, all $d \ge 2$, and $s$>2, $P(sn,sd) \ge P(n,d)^s$. 
\end{cor}

\begin{proof}
The proof is similar to that of Theorem \ref{double}. 
Let $A$ be a $(n,d)$-array. 
Create a new set of permutations, say $Q$, so that in every permutation in $Q$, index $\equiv ~k \pmod s$  positions contain (and only contain) index $ \equiv ~k\pmod s$ symbols. Further, for the $i^{th}$ equivalence class  (mod s), the string of symbols in positions in the $i^{th}$ equivalence class (mod s) is a string in $A$ transformed by $T_{i}(j) = s \cdot (j-1) + i$. 
\end{proof}

For example, Corollary \ref{cor5} implies 
$P(3^i \cdot 6,3^i \cdot 15) \ge 2^{3^i}$, for all integer $i \ge 0$.
The following theorems from \cite{Jiang} allow one to obtain good lower bounds in many cases.

\begin{theorem}
\label{Even}
\cite{Jiang} 
For all $n \ge 1$ and even $d \ge 2$, we have $P(n,d) \ge \frac{1}{2}P(n,d-1)$.
\end{theorem}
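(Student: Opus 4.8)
The plan is to exploit a parity constraint on the Kendall-$\tau$ distance. First I would record the elementary observation that an adjacent transposition is an odd permutation, so composing a permutation with $k$ adjacent transpositions multiplies its sign by $(-1)^k$; equivalently, swapping two adjacent entries changes the inversion count by exactly $\pm 1$. Consequently, if $d(\sigma,\pi)=k$ then $\sigma$ and $\pi$ are carried into one another by $k$ adjacent transpositions, so $\mathrm{sgn}(\sigma)=(-1)^k\,\mathrm{sgn}(\pi)$. In particular $d(\sigma,\pi)$ is even precisely when $\sigma$ and $\pi$ have the same sign (both even permutations or both odd) and odd precisely when they have opposite signs.

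Next I would take an optimal $(n,d-1)$-PA $A$, so that $|A| = P(n,d-1)$, and partition it as $A = A^{+}\cup A^{-}$ according to the sign of each permutation in $A$. By the pigeonhole principle one of these two parts, call it $A'$, satisfies $|A'| \ge |A|/2 = \tfrac12 P(n,d-1)$.

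Then for any two distinct $\sigma,\pi\in A'$ we have $d(\sigma,\pi)\ge d-1$, since $A'\subseteq A$ and $d(A)=d-1$; moreover $\sigma$ and $\pi$ have the same sign, so by the first step $d(\sigma,\pi)$ is even. Since $d$ is even, $d-1$ is odd, and an even integer that is at least the odd number $d-1$ is in fact at least $d$. Hence $d(\sigma,\pi)\ge d$ for all distinct $\sigma,\pi\in A'$, so $A'$ is an $(n,d)$-PA, and $P(n,d)\ge |A'|\ge \tfrac12 P(n,d-1)$, as claimed.

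The only step requiring any care is the parity observation, which I would state crisply via the sign homomorphism; the rest is a single pigeonhole step followed by the remark that an even integer $\ge d-1$ must be $\ge d$. So there is no real obstacle here — the whole content is the fact that the Kendall-$\tau$ distance between two permutations has the same parity as the relative sign of those permutations.
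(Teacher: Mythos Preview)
Your argument is correct: the parity of the Kendall-$\tau$ distance agrees with the relative sign of the two permutations, so splitting an optimal $(n,d-1)$-PA by sign and keeping the larger half yields an $(n,d)$-PA of at least half the size. The only tiny quibble is that you write ``$d(A)=d-1$'' where ``$d(A)\ge d-1$'' is what you actually use (and is all that is needed); either way the conclusion stands.

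As for comparison with the paper: there is nothing to compare against. The paper does not prove this theorem itself but merely quotes it from \cite{Jiang}; your sign-and-pigeonhole argument is exactly the standard proof found there.
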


\begin{theorem} 
\label{Jiang2}
\cite{Jiang} For all $n, d \ge 1$ we have
$P(n +1, d) \le (n+1) \cdot P(n, d)$. 
\end{theorem}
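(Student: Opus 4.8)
The plan is to start from a maximum $(n+1,d)$-PA $A$, so that $|A| = P(n+1,d)$, and to partition it according to the symbol in the first position: for each $v \in [1...n+1]$ set $A_v = \{\sigma \in A : \sigma(1) = v\}$. Since these $n+1$ classes partition $A$, we get $P(n+1,d) = \sum_{v=1}^{n+1}|A_v|$, so it will be enough to show $|A_v| \le P(n,d)$ for every $v$.

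Next, for a fixed $v$ and each $\sigma \in A_v$, I would form $\sigma'$, a permutation on $[1...n]$, by deleting the leading symbol $v$ and relabeling the remaining $n$ symbols through the unique order-preserving bijection $[1...n+1]\setminus\{v\} \to [1...n]$. The map $\sigma \mapsto \sigma'$ is injective on $A_v$ (the first symbol is known to be $v$, and the rest of $\sigma$ is recovered from $\sigma'$), so $|A_v| = |\{\sigma' : \sigma \in A_v\}|$. The heart of the argument is the claim that $d(\sigma',\pi') = d(\sigma,\pi)$ for all $\sigma,\pi \in A_v$. To prove it I would use the description of $d(\sigma,\pi)$ as the number of unordered symbol pairs that occur in opposite relative order in $\sigma$ and in $\pi$: every pair containing $v$ contributes $0$, since $v$ sits in position $1$ of both $\sigma$ and $\pi$ and hence precedes every other symbol in each; and every pair avoiding $v$ has the same relative order in $\sigma$ (resp.\ $\pi$) as in $\sigma'$ (resp.\ $\pi'$), because deleting $v$ and applying an order-preserving relabeling do not change the relative order of the surviving symbols. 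Hence the two distance counts coincide.

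From the claim, $\{\sigma' : \sigma \in A_v\}$ is an $(n,d)$-PA of the same size as $A_v$, so $|A_v| \le P(n,d)$; summing over $v$ gives $P(n+1,d) \le (n+1)P(n,d)$. I do not expect a genuine obstacle here — the only point that needs care is the choice of partition. Splitting by the value in an \emph{end} position (first or last), rather than, say, by the position occupied by the largest symbol, is what makes the Kendall-$\tau$ distance exactly preserved: a symbol pinned to the first (or last) slot of both permutations participates in no inversion between them, whereas a symbol fixed at an interior position generally would, yielding only $d(\sigma',\pi') \le d(\sigma,\pi)$, which is the wrong direction for the bound.
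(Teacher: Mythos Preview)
The paper does not supply its own proof of this theorem; it is quoted with a citation to \cite{Jiang} and used as a black box. Your argument is correct and is the standard one: take a maximum $(n{+}1,d)$-PA, partition it by the symbol occupying the first position, delete that symbol (with an order-preserving relabel) to obtain $n+1$ arrays on $[1\ldots n]$, and note that the Kendall-$\tau$ distance---viewed as the number of discordant pairs---is preserved exactly because a symbol sitting in position~$1$ of both permutations contributes no discordant pair. Your final paragraph, explaining why partitioning by the position of a fixed symbol (rather than by the symbol in a fixed end position) would give only $d(\sigma',\pi')\le d(\sigma,\pi)$ and hence the wrong inequality, is a useful sanity check, though not needed for the proof itself.
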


Using Theorems \ref{Jiang2} and \ref{Ge}
we have 
$P(14,11)\ge P(15,11)/15\ge 15!/(11\cdot 402234\cdot 15)\approx 19,703.2$

\begin{theorem} 
\label{Jiang}
\cite{Jiang} For all $n, d > 1$ we have
$P(n+1,d) \ge
\ceil{\frac{n+1}{d}}
P(n, d)$.
\end{theorem}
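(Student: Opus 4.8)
The plan is to build the desired $(n+1,d)$-PA from a maximum-size $(n,d)$-PA by inserting the new symbol $n+1$ into each member at several well-spaced positions. So fix a $(n,d)$-PA $A$ on the alphabet $\{1,\dots,n\}$ with $|A| = P(n,d)$, and let $[n]=\{1,\dots,n\}$. For $\sigma\in A$ and $0\le i\le n$, write $\sigma^{(i)}$ for the permutation on $\{1,\dots,n+1\}$ obtained by inserting the symbol $n+1$ into $\sigma$ so that exactly $i$ symbols of $\sigma$ precede it. The array we produce will be $B=\{\sigma^{(i)}:\sigma\in A,\ i\in S\}$ for a suitable set $S$ of insertion positions.

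Two elementary facts about the Kendall-$\tau$ distance drive the argument, and both follow at once from the characterization of $d(\cdot,\cdot)$ as the number of unordered symbol-pairs appearing in opposite relative order. First, for $i<j$ the only pairs whose relative order differs between $\sigma^{(i)}$ and $\sigma^{(j)}$ are the pairs $\{n+1,a\}$ with $a$ among the $j-i$ symbols lying strictly between the two insertion points, so $d(\sigma^{(i)},\sigma^{(j)})=j-i$ (equivalently, moving the largest symbol past a block of $j-i$ symbols costs exactly $j-i$ adjacent transpositions, and no fewer). Second, if $\pi',\rho'$ are permutations on $\{1,\dots,n+1\}$ and $\pi,\rho$ are obtained from them by deleting the symbol $n+1$, then every pair disagreeing in $\pi,\rho$ still disagrees in $\pi',\rho'$, hence $d(\pi',\rho')\ge d(\pi,\rho)$.

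Now take $S=\{0,d,2d,\dots,\floor{n/d}\,d\}\subseteq\{0,1,\dots,n\}$; it has $\floor{n/d}+1=\ceil{\frac{n+1}{d}}$ elements, and any two distinct members of $S$ differ by at least $d$. For distinctness and size of $B$: distinct $\sigma\in A$ give members of $B$ with distinct restrictions to $[n]$, and for fixed $\sigma$ distinct $i\in S$ give distinct permutations, so $|B|=|A|\cdot|S|=\ceil{\frac{n+1}{d}}\,P(n,d)$. To see $d(B)\ge d$, take distinct $\sigma^{(i)},\tau^{(j)}\in B$: if $\sigma=\tau$ then $i\neq j$ and the first fact gives $d(\sigma^{(i)},\sigma^{(j)})=|i-j|\ge d$ by the choice of $S$; if $\sigma\neq\tau$ then deleting $n+1$ from both recovers $\sigma$ and $\tau$, so the second fact gives $d(\sigma^{(i)},\tau^{(j)})\ge d(\sigma,\tau)\ge d(A)=d$. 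Thus $B$ is an $(n+1,d)$-PA and $P(n+1,d)\ge|B|=\ceil{\frac{n+1}{d}}\,P(n,d)$.

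There is no deep obstacle here; the proof is essentially bookkeeping. The one point requiring care is the first fact — verifying that sliding the largest symbol past exactly $k$ other symbols has Kendall-$\tau$ cost precisely $k$, so that the "consecutive gaps $\ge d$" property of $S$ translates faithfully into pairwise distance $\ge d$ within each block — together with the index identity $\floor{n/d}+1=\ceil{\frac{n+1}{d}}$, which is what makes the count match the stated bound.
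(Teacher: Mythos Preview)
Your proof is correct and follows essentially the same approach the paper describes: after stating the theorem, the paper notes that it was proved in \cite{Jiang} using the set $\{1,d+1,2d+1,\dots\}$ of positions for the new symbol (viewed as a $(n+1,1,d)$-array), which is precisely your set $S$ in $1$-indexed form, and your two elementary facts about the Kendall-$\tau$ distance are exactly what justify the construction.
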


For example, to compute a lower bound for $P(14,11)$
one can use, iteratively, Theorem \ref{Jiang} to obtain $P(14,11) \ge \ceil{\frac{14}{11}}\cdot
\ceil{\frac{13}{11}}
\cdot P(12,11) = 4\cdot P(12,11)$.  
By computation  (using the random greedy algorithm) we have $P(12,11) \ge 19,277$, so  

\begin{equation}
\label{example1}
P(14,11) \ge 77,108.  
\end{equation}

Theorems \ref{gen} and \ref{better}, described in the next section, give a technique to compute better lower bounds, in cases where t is large, as summarized in Tables \ref{better2a} and \ref{better2b}.

\section{Recursive Techniques}

There is often a better way to compute $P(n,d)$ from $P(n-m,d)$, for various $m \ge 1$, than an iterative use of Theorem \ref{Jiang}.
Let $S_{n,m}$ be the 
set of permutations on $[1..n]$ with the restriction that the first $n-m$ symbols are in sorted order, for a given $m < n$.
(The first $n-m$ symbols in sorted order means they can be replaced by zeroes, although the strings with zeroes are not permutations.)
A set $A\subseteq S_{n,m}$ 
with Kendall-$\tau$ distance $d$ is called a $(n,m,d)$-PA or $(n,m,d)$-array. 
Let $P(n,m,d)$ denote the maximum cardinality of any $(n,m,d)$-array $A$.

In \cite{Jiang} Theorem \ref{Jiang} was proved using the set $\{1, d+1, 2d+1,\dots, \ceil{\frac{n+1}{d}}{d+1}\}$, which is a $(n+1,1,d)$-array. 
In general, a $(n,m,d)$-array can be much larger than one obtained by an iterative use of Theorem \ref{Jiang}. 
We exhibit a $(14,2,11)$-array with 5 permutations $\tau_1,\dots,\tau_5$ in Table \ref{t:14-2-11}.

\begin{table}[]
\centering
\setlength{\tabcolsep}{3pt}
\begin{tabular}{|c|rrrrrrrrrrrrrr|c|}
 \hline
  & 1 & 2 & 3 & 4 & 5 & 6 & 7 & 8 & 9 & 10 & 11 & 12 & 13 & 14 & $P_{\tau_i}(14,11)$\\
\hline
$\tau_1$ & 0 & 0 & 0 & 0 & 0 & 0 & 13 & 14 & 0 & 0 & 0 & 0 & 0 & 0 & 47,851\\
$\tau_2$  & 0 & 0 & 14 & 0 & 0 & 0 & 0 & 0 & 0 & 0 & 0 & 0 & 0 & 13 & 36,250\\
$\tau_3$  & 0 & 13 & 0 & 0 & 0 & 0 & 0 & 0 & 0 & 0 & 0 & 0 & 0 & 14 & 19,227\\
$\tau_4$  & 13 & 14 & 0 & 0 & 0 & 0 & 0 & 0 & 0 & 0 & 0 & 0 & 0 & 0 & 19,227\\
$\tau_5$  & 0 & 0 & 0 & 0 & 0 & 0 & 0 & 0 & 0 & 0 & 0 & 0 & 14 & 13 & 19,227\\
\hline
\end{tabular}
\vspace{5pt}
\caption{$(14,2,11)$-array with 5 permutations $\tau_1,\dots,\tau_5$. Since the first 12 symbols in all $\tau_i$ are sorted, they are replaced by zeros.
The last column contains lower bounds for $P_{\tau_i}(14,11), i=1,\dots,5$.}
\label{t:14-2-11}
\end{table}

For any integer $n > 1$ let $\pi_1(a,s)$ denote the string $n \ldots n-1$, and let $\pi_2(a,s)$ denote the string $n-1 \ldots n$, where '$a$' is the number of dots between the two symbols $n$ and $n-1$ and '$s$' is the position of the first symbol of the pair. We call the number '$a$' the {\it separation number} and the number '$s$' {\it the starting position}. 
For example, consider the set of permutations $A_{9,2,3}$, for $n=9,~ m=2$ and $d=3$, where the symbols in $[1..7]$ are in order and replace any blank positions or dots:

\vspace{-5pt}
\begin{itemize}
\setlength\itemsep{1mm}
    \item 

$\pi_1(0,s), s \in \{1, 3, 5, 7\}$,

\item 
$\pi_2(0,s), s \in \{2, 4, 6, 8\}$,

\item 
$\pi_1(3,s)$ and $\pi_2(3,s), s \in \{1, 4\}$,

\item 
$\pi_1(4,s)$ and $\pi_2(4,s), s \in \{2\}$,

\item 
$\pi_1(7,s)$ and $\pi_2(7,s), s \in \{1\}$.
\end{itemize}

To illustrate our notation, the set $A_{9,2,3}$ is shown in Table \ref{P923} (where, for convenience, the symbols in $[1..7]$, which appear in order, are replaced with -'s). The $(9,2,3)$-array $A_{9,2,3}$ has sixteen elements, so $P(9,2,3) \ge 16$. (Our search program does not find any larger $(9,2,3)$-array, so this may be optimal.)

\begin{table}[]
    \centering
    \begin{tabular}{|c|}
    
9 8 - - - - - - -\\
- 8 9 - - - - - -\\
- - 9 8 - - - - -\\
- - - 8 9 - - - -\\
- - - - 9 8 - - -\\
- - - - - 8 9 - -\\
- - - - - - 9 8 -\\
- - - - - - - 8 9\\
9 - - - 8 - - - -\\
8 - - - 9 - - - -\\
- 9 - - - - 8 - -\\
- 8 - - - - 9 - -\\
- - - 9 - - - 8 -\\
- - - 8 - - - 9 -\\
9 - - - - - - - 8\\
8 - - - - - - - 9 \\
\end{tabular}
\caption{A $(9,2,3)$-array $A_{9,2,3}$.}
    \label{P923}
\end{table}

Similarly, we give a $(10,2,3)$-array $A_{10,2,3}$ with elements:

\begin{itemize}

\item 
$\pi_1(0,s), s \in \{1, 3, 5, 7, 9 \}$,

\item 
$\pi_2(0,s), s \in \{2, 4, 6, 8\}$,

\item 
$\pi_1(3,s)$ and $\pi_2(3,s), ~s \in \{1, 3, 6\}$,

\item
$\pi_1(4,s)$ and $\pi_2(4,s), s \in \{ 4\}$.

\item 
$\pi_1(6,s)$ and $\pi_2(6,s), s \in \{ 1 \}$.

\item 
$\pi_1(7,s)$ and $\pi_2(7,s), s \in \{ 2\}$.
\end{itemize}

The $(10,2,3)$-array $A_{10,2,3}$ has twenty-one elements, so $P(10,2,3) \ge 21$. (Our search program again does not find any larger $(10,2,3)$-array.)

Consider, permutations $\pi_1(0,s)$ and $\pi_2(0,s+1)$ (or, respectively,  $\pi_2(0,s)$ and $\pi_1(0,s+1)$), with all symbols other than $n-1$ and $n$ in order.
The Kendall-$\tau$ distance between them is three because one needs two adjacent transpositions to move the symbol $n$ (respectively, $n-1$) from position $s$ to position $s+2$ and one needs one additional adjacent transpositions to move the symbol $n-1$, (respectively $n$)  in position $s+1$, where it occurs initially, back to position $s+1$ after its displacement by the movement of the symbol $n$ (respectively, $n-1$). 
Clearly, $d(\pi_1(r,u),\pi _1(s,t))\ge 3$ if $|r-s| \ge 3$, as it takes at least 3 adjacent transpositions to change the separation distance $r$ in one case to the separation distance $s$ in the other. 
Also, $d(\pi_1(r,u),\pi _1(s,t))\ge 3$ if $s\ge r+1$ and $t\ge u+1$.
As an example of the latter, note that $\pi_1(7,1)$ and $\pi_1(8,2)$ have distance three, as it takes one transposition to move $n$ in position 1 to position 2 and two more adjacent transpositions to move $n-1$ from position 8 to position 10.

In this way, one can verify  that $A_{9,2,3}$ and $A_{10,2,3}$ have Kendall-$\tau$ distance three.

Generally, define:
\vspace{-5pt}
\begin{itemize}
\setlength\itemsep{1mm}
    \item 
$D_{n,2,3}=
\{~ \pi_1(0,s)~ |~ s \in \{1,3, ... ,n-1 \} ~\}$, if $n$ is even, 
\item
$D_{n,2,3} =
\{ ~ \pi_1(0,s) ~ | ~ s \in \{1,3, ... ,n-2 \}~ \}$, if $n$ is odd, 
\item 
$E_{n,2,3} =
\{ ~ \pi_2(0,s) ~ | ~ s \in \{2,4, ... ,n-2 \} ~ \} ~ \}$, if $n$ is even,
\item
$E_{n,2,3} =
\{ ~ \pi_2(0,s) ~ | ~ s \in \{2,4, ... ,n-1\}$, if $n$ is odd, 
\item
$F_{n,2,3} = \{ ~ \pi_1(a,s), \pi_2(a,s) ~|~ s \in \{1,4,7, ...\} , a \in \{3,7,11,... \} ~\} $,
\item
$G_{n,2,3} = \{ ~ \pi_1(b,t), \pi_2(b,t) ~|~ t \in \{2,5,8,...\}, b \in \{4,8,12, ... \} ~ \}$.
\end{itemize}
\vspace{-5pt}

(The values of {\it s,t,a,b} are such that the resulting permutation has length $n$). 

Let $B_{n,2,3} ~=~D_{n,2,3} \bigcup E_{n,2,3}
\bigcup F_{n,2,3}
\bigcup G_{n,2,3}$. Observe that $B_{n,2,3}$ has pairwise Kendall-$\tau$ distance 3.
This follows from the discussion in the previous paragraph, when starting positions and separation numbers are close; otherwise, note that 
either separation numbers or starting positions 
differ by at least 3. (As illustrated in Table \ref{P923}.)

\begin{theorem}
\label{P_n_2_3}
For all $n\ge 5$,
$P(n,2,3)~ \ge 
\begin{cases}
    
\frac{(n+1)^2}{6}, & \text {if $n \equiv 5$ mod 6,}\\

\frac{n^2+2n-6}{6} & \text {if  $n \equiv 0$ or $n \equiv 4$ mod 6, }\\

\frac{n^2+2n-3}{6} & \text {if  $n \equiv 1$ or $n \equiv 3$ mod 6,}\\

\frac{n^2+2n-2}{6}
& \text {if $n \equiv 2$ mod 6.}\\
\end{cases}
$
\end{theorem}

\begin{proof} 
We show that $B_{n,2,3}$ has the given sizes. 
Clearly, $|D_{n,2,3}|+|E_{n,2,3}|=n-1$.
We assume that initially degree two polynomials can be used to describe each case, find the polynomials using Lagrange's formula, and then verify the correctness of the computed polynomial for other values. 
That is, we obtain the unique degree 2 polynomial passing through three given points. 
The results are dependent on, for which $i$, $n \equiv i$ mod 6.
For instance, when
$n \equiv 5$ mod 6, we have $|B_{17,2,3}|=54$,
$|B_{29,2,3}|=150$, and $|B_{41,2,3}|=294$. This yields the unique degree 2 polynomial: $\frac{(n+1)^2}{6}$. The polynomial is verified to be the correct size of $B_{n,2,3}$, for other values, when $n \equiv 5$ mod 6. Other cases are done similarly.
\end{proof}

Observe that $|A_{10,2,3}| = 21$, whereas $|B_{10,2,3}|=19$.
This is due to the fact that, $\pi_1(3,s)$ and $\pi_2(3,s)$, in $A_{10,2,3}$, are chosen for $s \in \{1,3,6\}$, not for $s \in \{1,4,7\}$.

Note that a two-fold iterative use of Theorem \ref{Jiang} gives

$P(n,2,3)~ \ge 
\begin{cases}
    
\frac{n^2}{9}, & \text {if $n \equiv 0$ mod 3,}\\

\frac{n^2+n-2}{9} & \text {if $n \equiv 1$ mod 3, and}\\

\frac{n^2+2n+1}{9} & \text {if $n \equiv 2$ mod 3}.\\

\end{cases}
$

\bigskip

So, Theorem \ref{P_n_2_3} yields a better result $P(n,2,3)\ge n^2/6+n/3-1$, for all $n \ge 5$. 
We show $P(n,2,3)\le n^2/5+5.8n-12$ in Theorem \ref{uP_n_2_3}.

\bigskip

Consider a two-fold iterative use of Theorem \ref{Jiang} for $n=14$ and $d=4$. The result is shown in Figure 
\ref{2fold}.

\begin{figure}[htb]
\centering
\begin{tabular}{|c|c|c|c|c|c|c|c|c|c|c|c|c|c|}
14&13&-&-&-&-&-&-&-&-&-&-&-&-  \\
13&-&-&-&14&-&-&-&-&-&-&-&-&-\\
13&-&-&-&-&-&-&-&14&-&-&-&-&-\\
13&-&-&-&-&-&-&-&-&-&-&-&14&-\\
14&-&-&-&-&13&-&-&-&-&-&-&-&-\\
-&-&-&-&14&13&-&-&-&-&-&-&-&-\\
-&-&-&-&13&-&-&-&14&-&-&-&-&-\\
-&-&-&-&13&-&-&-&-&-&-&-&14&-\\     
14&-&-&-&-&-&-&-&-&13&-&-&-&-\\
-&-&-&-&14&-&-&-&-&13&-&-&-&-\\
-&-&-&-&-&-&-&-&14&13&-&-&-&-\\
-&-&-&-&-&-&-&-&13&-&-&-&14&-\\
14&-&-&-&-&-&-&-&-&-&-&-&-&13\\
-&-&-&-&14&-&-&-&-&-&-&-&-&13\\
-&-&-&-&-&-&-&-&14&-&-&-&-&13\\
-&-&-&-&-&-&-&-&-&-&-&-&14&13\\  
\end{tabular}
\vspace{5pt}
\caption{Sixteen permutations in $S_{14,2}$ obtained by the proof of Theorem \ref{Jiang}, given in \cite{Jiang}, with "-" representing all missing elements (they are in order).
    }
\label{2fold}
\end{figure}
Observe that additional permutations in $S_{14,2}$ can be added to those shown in Figure \ref{2fold} and create a (14,2,4)-array with 25 elements. 
The additional ones are shown in Figure \ref{additional}.

\begin{figure}[htb]
\centering
\begin{tabular}{|c|c|c|c|c|c|c|c|c|c|c|c|c|c|}
-&-&14&13&-&-&-&-&-&-&-&-&-&-  \\
-&-&-&13&-&-&14&-&-&-&-&-&-&-\\
-&-&-&13&-&-&-&-&-&-&14&-&-&-\\
-&-&14&-&-&-&-&13&-&-&-&-&-&-\\
-&-&-&-&-&-&14&13&-&-&-&-&-&-\\
-&-&-&-&-&-&-&13&-&-&14&-&-&-\\

-&-&14&-&-&-&-&-&-&-&-&13&-&-\\
-&-&-&-&-&-&14&-&-&-&-&13&-&-\\
-&-&-&-&-&-&-&-&-&-&14&13&-&-\\     
\end{tabular}
\vspace{5pt}
\caption{Nine additional permutations in $S_{14,2}$ with "-" representing all missing elements.}
\label{additional}
\end{figure}

The additional elements shown in Figure \ref{additional} are at distance at least four from all other elements in Figure \ref{2fold} and 
\ref{additional}. 
For example, the first permutation in Figure \ref{additional} is at distance four from the first one in Figure \ref{2fold}, as both symbols, 14 and 13, need to be moved two positions. 
Similarly, the first permutation in Figure \ref{additional} is at distance four from the second one in Figure \ref{2fold}, as the symbol 13 needs to be moved three positions and the symbol 14 needs to be moved one. 
Also, the first two permutations in Figure \ref{additional} are at distance five, as the symbol 14 needs to move four positions, and to restore the symbol 13 to its original position, one more adjacent transposition is needed.

Note that the additional permutations in Figure \ref{additional} have a similar structure to the ones shown in Figure \ref{2fold}.
The difference is that the permutations in Figure \ref{additional} are positioned in the "gaps" between elements in those shown in Figure \ref{2fold}. Let $S_{n+2,2,4}$ denote the set of all permutations given by the two-fold use of Theorem \ref{Jiang}, as illustrated in Figure \ref{2fold} for $n=14$, together with the "additional" permutations, as illustrated in Figure \ref{additional} for $n=14$, which are all permutations with the symbol '$n$' in all positions $\equiv 2$ mod 4 and the symbol '$n-1$' in all positions $\equiv 3$ mod 4.

This example is the basis for the following theorem:

\begin{theorem} 
\label{improved}
For all $n \ge 4$, 
$P(n +2, 2,4) \ge
\begin{cases}
\frac{n(n+4)}{8} +1, & \text {if $n \equiv 0$ mod 4},\\

 \frac{(n+3)(n+1)}{8}, & \text {if $n \equiv 1$ or $n \equiv 3$ mod 4,} \\

\frac{(n+2)^2}{8},& \text {if $n \equiv 2$ mod 4},\\
\end{cases}$
\end{theorem}

\begin{proof}
    Consider the first two cases. The proof for the other cases follows in a similar manner. In each case, we calculate the size of $S_{n+2,2,4}$.

     For the first case, notice that
$\frac{n(n+4)}{8} +1 = (\frac{n+4}{4})^2 +
 (\frac{n}{4})^2$. The term ($ \frac{n+4}{4} )^2$ comes from a two-fold use of Theorem \ref{Jiang}, and it is equal to $\ceil{
    \frac{n+1}{4}} \cdot \ceil{
    \frac{n+2}{4}}$, when $n \equiv 0$ mod 4. The second term $(\frac{n}{4})^2$ comes from the insertion of the additional elements, as exhibited in Table \ref{additional}. As shown in our discussion of the example, the entire set of permutations thus formed has pairwise Kendall-$\tau$ distance 4.

    For the second case, notice that $\frac{(n+3)(n+1)}{8} = ( \frac{n+3}{4})^2 +( \frac{n-1}{4})^2+\frac{n-1}{4}$.
     The first term, $( \frac{n+3}{4}
     )^2$, comes from a two-fold use of Theorem \ref{Jiang}, and is equal to $\ceil{
    \frac{n+1}{4}} \cdot \ceil{\frac{n+2}{4}}$, when $n \equiv 1$ mod 4. 
    The last two terms come from the insertion of the additional elements, as exhibited in Table \ref{additional}. 
    Observe that when $n \equiv 1$ mod 4, there is an extra position at the end from those used in the two-fold use of Theorem \ref{Jiang} and this allows for $\frac{n-1}{4}$ additional permutations than without the extra position. 
    As shown in our discussion of the example, the entire set of permutations thus formed has pairwise Kendall-$\tau$ distance 4.
\end{proof}

Our search program does not find better results for $P(n+2,2,4)$, for any $n \le 250$. 
In fact, the search program finds a number for $P(n+2,2,4)$ that is exactly the same as those given in Theorem \ref{improved}. 
So, Theorem \ref{improved} may be optimum. A general theorem for the use of $P(n,m,d)$ is given below:

\begin{theorem}
\label{gen}
For any $m < n$ and d, $P(n,d) \ge P(n,m,d)\cdot P(n-m,d)$.
\end{theorem}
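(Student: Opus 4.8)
The plan is to build an $(n,d)$-PA of the claimed cardinality by ``inflating'' an optimal $(n,m,d)$-array with an optimal $(n-m,d)$-PA. I will fix a set $A\subseteq S_{n,m}$ with $d(A)\ge d$ and $|A|=P(n,m,d)$, and a set $B$ of permutations on $[1...n-m]$ with $d(B)\ge d$ and $|B|=P(n-m,d)$ --- both exist by definition. For $\tau\in B$ let $\widehat\tau$ denote the permutation of $[1...n]$ that acts as $\tau$ on $\{1,\dots,n-m\}$ and fixes each of $n-m+1,\dots,n$, and for $\sigma\in A$ define $\psi(\sigma,\tau):=\widehat\tau\circ\sigma$, the permutation obtained from $\sigma$ by relabelling its values through $\widehat\tau$. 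Equivalently: $\psi(\sigma,\tau)$ leaves the symbols $n-m+1,\dots,n$ wherever $\sigma$ places them, and in the remaining $n-m$ positions --- which in $\sigma$ carry $1,2,\dots,n-m$ in increasing order, since $\sigma\in S_{n,m}$ --- it writes $\tau(1),\tau(2),\dots,\tau(n-m)$ from left to right. The goal is to show that $C:=\{\psi(\sigma,\tau):\sigma\in A,\ \tau\in B\}$ is an $(n,d)$-PA with $|C|=P(n,m,d)\cdot P(n-m,d)$, which is precisely the stated inequality.

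First I would verify that $(\sigma,\tau)\mapsto\psi(\sigma,\tau)$ is injective, so that $|C|=|A|\cdot|B|$. From $\rho=\psi(\sigma,\tau)$ one recovers $\sigma$: the set of positions where $\rho$ takes a value $\le n-m$ coincides with the corresponding set for $\sigma$ (because $\widehat\tau$ preserves $\{1,\dots,n-m\}$), so filling those positions with $1,2,\dots,n-m$ in increasing order and copying $\rho$ on the remaining positions reproduces $\sigma$; then $\widehat\tau=\rho\circ\sigma^{-1}$ reproduces $\tau$.

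Next comes the distance bound: for $\rho=\psi(\sigma,\tau)$ and $\rho'=\psi(\sigma',\tau')$ with $(\sigma,\tau)\ne(\sigma',\tau')$, I must show $d(\rho,\rho')\ge d$. I will use two elementary properties of the Kendall-$\tau$ distance: it is invariant under relabelling both of its arguments by one common permutation of the symbols, and it does not increase when both arguments are restricted to a common sub-alphabet (deleting all other symbols). If $\tau=\tau'$, then $\sigma\ne\sigma'$, and relabelling-invariance gives $d(\rho,\rho')=d(\widehat\tau\circ\sigma,\widehat\tau\circ\sigma')=d(\sigma,\sigma')\ge d$ since $A$ is an $(n,m,d)$-array. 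If $\tau\ne\tau'$, restrict $\rho$ and $\rho'$ to the sub-alphabet $\{1,\dots,n-m\}$; by the explicit description of $\psi$ above these restrictions are exactly the sequences $\tau(1)\cdots\tau(n-m)$ and $\tau'(1)\cdots\tau'(n-m)$, so their Kendall-$\tau$ distance equals $d(\tau,\tau')\ge d$ since $B$ is an $(n-m,d)$-PA, whence $d(\rho,\rho')\ge d(\tau,\tau')\ge d$. In every case $d(C)\ge d$, and therefore $P(n,d)\ge|C|=P(n,m,d)\cdot P(n-m,d)$.

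The step I expect to require the most care is the case $\tau\ne\tau'$ together with $\sigma\ne\sigma'$: then the large symbols $n-m+1,\dots,n$ occupy unrelated positions in $\rho$ and $\rho'$ and it is not a priori clear that the distance survives. The point is that the common sorted sub-word on $\{1,\dots,n-m\}$ shared by every member of $S_{n,m}$ is a ``reservoir'' of $\binom{n-m}{2}$ pairs ordered identically by all of them; plugging $\tau$ and $\tau'$ into that reservoir already produces $d(\tau,\tau')$ disagreements among the symbols $\le n-m$ alone, regardless of the placement of the large symbols, and restricting to a sub-alphabet only throws disagreements away. (This uses the reading of ``the first $n-m$ symbols are in sorted order'' as ``the symbols $1,\dots,n-m$ appear in increasing order'', which is what makes the restriction of $\rho$ to $\{1,\dots,n-m\}$ equal $\tau$ on the nose.)
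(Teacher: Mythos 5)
Your construction is exactly the paper's --- substitute each $\tau\in B$ into the sorted slots of each member of $A$ --- and your proof is correct. The only difference is that you make rigorous, via relabelling-invariance and restriction to the sub-alphabet $\{1,\dots,n-m\}$, the distance claim the paper dismisses as ``easily seen,'' in particular the case where both coordinates differ, which the paper's two-case sketch glosses over.
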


\begin{proof} 
Let $A$ be a $(n,m,d)$-array and $B$ be a $(n-m,d)$-array.
For each permutation $\pi$ in $A$ and each permutation $\tau$ in $B$, form the permutation $(\pi,\tau)$ by substituting the $n-m$ symbols in the order given by $\tau$ for the first $n-m$ symbols, given in order, in $\pi$. 

It is easily seen that $d((\pi,\tau), (\rho,\sigma)) \ge d$, if either $\pi \ne \rho$ or $\sigma \ne \tau$. That is, for $\pi, \rho \in A$, if $ \pi \ne \rho$, then $d(\pi,\rho) \ge d$.
Clearly, changing the order of the other $n-m$ symbols, which appear in order in permutations in $A$, does not make the distance smaller. 
A symmetric argument applies when $\sigma, \tau$ are different permutations in the $(n-m,d)$-array $B$.
\end{proof}

Using Theorem \ref{gen}, for example, we obtain $P(14,11) \ge 5\cdot P(12,11) \ge 5 \cdot 19,277 = 96,135$ which is better than obtained by Theorem \ref{Jiang}. 
The recursion implicit in Theorem \ref{gen} is also useful for computing $P(n,d)$ for large $n$. 
Consider, for example, $n=18$. 
There are 18! = 6,402,373,705,728,000 permutations on eighteen symbols, so it is infeasible to compute, say, $P(18,15)$ directly. 
However, the task becomes easier by dividing it into two parts, namely computing separately $P(18,8,15)$ and $P(10,15)$. That is, computing $P(10,15)$ involves looking at $10! = 3,628,800$ permutations and computing $P(18,8,15)$
involves looking at $\binom{18}{8} \cdot 8! = 43,758 \cdot 40,320 = 1,764,323,000$ permutations.

Moreover, one can improve on Theorem \ref{gen} using a modification. 
That is, given a $(n,m,d)$-array A, for each  $\tau$ in $A$, find the set of permutations of the remaining $n-m$ symbols with the $m$ symbols of $\tau$ fixed in their positions. 
The $m$ fixed symbols of $\tau$ can make different arrangements of the remaining $n-m$ symbols be at larger distance than they would be without the fixed symbols. 
Let $P_\tau(n,d)$ denote the maximum cardinality of any $(n,d)$-PA with the (largest) $m$ symbols fixed in the positions they occur in $\tau$, but where the other $n - m$ symbols can be in any order.  
Alternatively, we denote this quantity by $P(n,d;i_1,\dots,i_m)$, where $i_1,\dots,i_m$ are the fixed positions of symbols $n-m+1,\dots,n$, not necessarily in that order.
For example, in Table \ref{t:14-2-11}, for the arrangement $\tau_1$, there are $47,851$ permutations in $P_{\tau_1}(14,11)$ by filling in the symbols of $[1..12]$, whereas our bound for $P(12,11)$ is $19,227$. 
That is, the fixed symbols $13$ and $14$ in positions $7$ and $8$, respectively, allow more arrangements of the additional twelve symbols to be at distance 11.

\begin{theorem}
\label{better}
For any $(n,m,d)$-array $A$, $P(n,d) \ge \sum_{~\tau \in A} P_\tau(n,d)$. 
\end{theorem}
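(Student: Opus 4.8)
The plan is to build, for each representative $\tau$ in the given $(n,m,d)$-array $A$, an optimal ``$\tau$-compatible'' permutation array, and then to verify that the union of all of these is again a valid $(n,d)$-PA of the claimed cardinality. Concretely, for each $\tau\in A$ let $C_\tau$ be a $(n,d)$-PA of size $P_\tau(n,d)$ in which every permutation places the top $m$ symbols $n-m+1,\dots,n$ in exactly the positions they occupy in $\tau$; such a $C_\tau$ exists by the definition of $P_\tau(n,d)$. Put $C=\bigcup_{\tau\in A}C_\tau$. Two things must be checked: that the sets $C_\tau$ are pairwise disjoint, and that any two permutations drawn from $C$, possibly from different $C_\tau$, are at Kendall-$\tau$ distance at least $d$. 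Disjointness is immediate: a permutation in $C_\tau\cap C_{\tau'}$ would force $\tau$ and $\tau'$ to place the top $m$ symbols in identical positions, but in $S_{n,m}$ the remaining positions are then forced to hold the bottom symbols in increasing order, so $\tau=\tau'$. Hence $|C|=\sum_{\tau\in A}P_\tau(n,d)$.

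For the distance condition the only nontrivial case is $\sigma\in C_\tau$, $\rho\in C_{\tau'}$ with $\tau\ne\tau'$, since for $\tau=\tau'$ the bound $d(\sigma,\rho)\ge d$ is built into $C_\tau$. The key lemma I would isolate is that $d(\sigma,\rho)\ge d(\tau,\tau')$; granting it, the theorem follows because $A$ is a $(n,m,d)$-array, so $d(\tau,\tau')\ge d$. To prove the lemma I would count inverted unordered pairs of symbols, split according to how many of the two symbols lie in $T=\{n-m+1,\dots,n\}$. For a pair with both symbols in $T$: since $\sigma$ and $\tau$ agree on the positions of all top symbols, the subsequence of top symbols is identical in $\sigma$ and $\tau$, and likewise in $\rho$ and $\tau'$, so such a pair is inverted between $\sigma$ and $\rho$ exactly when it is inverted between $\tau$ and $\tau'$. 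For a pair with both symbols outside $T$: the bottom symbols are sorted in both $\tau$ and $\tau'$, so such a pair is never inverted between $\tau$ and $\tau'$, and therefore contributes nonnegatively to the comparison. The one step that needs a short argument is the mixed pairs.

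For a fixed top symbol $t\in T$, the number of bottom symbols lying to the left of $t$ is determined by the positions of the top symbols alone, hence is the same in $\sigma$ as in $\tau$; call it $\beta(t)$, and let $\beta'(t)$ be the corresponding count for $\rho$ and $\tau'$. In $\tau$ the bottom symbols to the left of $t$ are precisely $\{1,\dots,\beta(t)\}$ and in $\tau'$ they are $\{1,\dots,\beta'(t)\}$, so the number of mixed inversions involving $t$ between $\tau$ and $\tau'$ equals $|\beta(t)-\beta'(t)|$. In $\sigma$ the bottom symbols left of $t$ form some $\beta(t)$-element set $U$ and in $\rho$ some $\beta'(t)$-element set $U'$, so the number of mixed inversions involving $t$ between $\sigma$ and $\rho$ is $|U\,\triangle\,U'|\ge\big||U|-|U'|\big|=|\beta(t)-\beta'(t)|$. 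Summing over $t\in T$ shows that $\sigma$ and $\rho$ have at least as many mixed inversions as $\tau$ and $\tau'$; together with the both-top and both-bottom contributions this gives $d(\sigma,\rho)\ge d(\tau,\tau')\ge d$. Hence $C$ is a $(n,d)$-PA and $P(n,d)\ge|C|=\sum_{\tau\in A}P_\tau(n,d)$.

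I expect the mixed-pair count to be the only real obstacle; disjointness, the both-top and both-bottom pairs, and assembling the union are bookkeeping. An alternative route to the key lemma is to note that the operation ``keep the top symbols where they are and re-sort the bottom symbols'' is a well-defined map $S_n\to S_{n,m}$ sending $\sigma\mapsto\tau$ and $\rho\mapsto\tau'$, and to show it is $1$-Lipschitz for the Kendall-$\tau$ metric; but a clean proof of that Lipschitz property seems to require essentially the same pair count, so I would present the direct argument above.
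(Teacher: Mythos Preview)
Your proposal is correct and follows the same overall construction as the paper: form the union of optimal $\tau$-compatible arrays over $\tau\in A$ and argue that distinct pieces remain at distance at least $d$. The paper's own proof, like its proof of Theorem~\ref{gen}, simply asserts that ``changing the order of the other $n-m$ symbols \dots\ does not make the distance smaller'' as clear; you have supplied the missing justification via the inversion count split into both-top, both-bottom, and mixed pairs, with the $|U\triangle U'|\ge|\beta(t)-\beta'(t)|$ inequality handling the nontrivial mixed case. So the route is the same, but your write-up is substantially more complete than the paper's.
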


\begin{proof}
Let A be a $(n,m,d)$-array and, for each permutation $\pi \in A$, let $\tau$  be a permutation in an $(n,d)$-PA with the highest $m$ symbols in the same position as in $\pi$. Form the new permutation $(\pi,\tau)$ by substituting the $n-m$ symbols in the order given by $\tau$ for the first $n-m$ symbols, given in order, in $\pi$. 

It is easily seen, as in the proof of Theorem \ref{gen}, that $d((\pi,\tau), (\rho,\sigma)) \ge d$, if either $\pi \ne \rho$ or $\sigma \ne \tau$.
\end{proof}

For example, we saw the result $P(14,11) \ge 96,125$ using Theorem \ref{gen}, with a $(14,2,11)$-array with five permutations $\tau_i,i=1,\dots,5$ as shown in Table \ref{t:14-2-11}. 
We computed lower bounds for $P_{\tau_i}(14,11)$, as shown in the last column of this table.
By Theorem \ref{better}, we obtain the improved lower bound of $P(14,11) \ge
\sum_{i=1}^5 P_{\tau_i}(14,11) \ge 141,782$.

Since the greedy approach with randomness was successful in computing permutation arrays, we adapted it for computing permutation arrays using Theorems \ref{gen} and \ref{better}.
There are two computational problems in applying Theorem \ref{better}:

{\bf Problem IndexPA}. List in lexicographic order permutations $\pi$ in $S_{n,m}$. 

{\bf Problem FixedPA}. Given a permutation $\tau$, list in lexicographic order permutations in $S_n$  such that $m$ largest numbers fixed in the positions they occur in $\tau$.

Recall that the standard algorithm for listing permutations in $S_n$ in lexicographic order is as follows. 
The first permutation is $\pi=(1,2,\dots,n)$. 
If $\pi$ is the current permutation, then 
(i) find the largest $i$ with $\pi(i)<\pi(i+1)$ (stop if $i$ is not found),   
(ii) swap $\pi(i)$ and the smallest $\pi(j)>\pi(i), j>i$, and
(iii) reverse $\pi(i+1),\dots,\pi(n)$. 

Surprisingly, this algorithm can be applied to solve problem IndexPA if\\
(i) we start with sequence $s=(0,\dots,0,n-m+1,\dots,n)\in\Z^n$ and \\
(ii) when $s$ is computed, we output a permutation where zeros $s$ are replaced by $1,2,\dots,n-m$ in this order.

To solve problem FixedPA, we can assume that $1,2,\dots,n-m$ are in positions $k_1<k_2<\dots<k_{n-m}$ in $\tau$, \ie $\tau(k_i)=i, i\in [1..n-m]$.
Problem FixedPA can be solved by listing  permutations $\pi$ in $S_{n-m}$ in lexicographic order and reporting corresponding permutations $\pi'$ defined as 

\[\pi'(i)=
\begin{cases}
\tau(i) & \text {if $\tau(i)>n-m$},\\
\pi(\tau(i))  & \text {otherwise}.\\
\end{cases}
\]

\section{Lower and upper bounds for $P(n,m,d)$ and $P(n,d)$.}

Similar to Theorem \ref{Jiang}, we give a theorem about recursively computing $P(n,m+1,d)$. 
The proof is nearly identical to the proof of Theorem \ref{Jiang}, but now involves $n-m$ instead of $n$.

\begin{theorem}
\label{Pnmd}
For any $m < n$ and d, $P(n,m+1,d) \ge \left\lceil \frac{n-m}{d}\right\rceil \cdot P(n,m,d)$.
\end{theorem}

\begin{proof}
Similar to the proof of Theorem \ref{Jiang}   
one can place, into any permutation $\sigma$ of a $(n,m,d)$-array $A$, the new symbol "$n-m$"  and replace any one of the following: the first zero, the $d+1$-st zero, the $2d+1$-th zero, etc. 
As the Kendall-$\tau$ distance is at least $d$ between any two of these placements, the resulting new permutations are at distance at least $d$. 
\end{proof}

There are $\frac{n!}{(n-m)!}$ permutations in $S_{n,m}$ for finding $P(n,m,d)$.  
When $m$ is small, this 
is relatively small compared to the $n!$ permutations to explore for finding $P(n,d)$.
Also, $P(n,m,d)$ generalizes $P(n,d)$ as $P(n,d)=P(n,n,d)$.
Finding exact values or bounds for $P(n,m,d)$ is an interesting problem in its own right. 
Clearly, $P(n,1,d)=\ceil{n/d}$.
In general, by Theorem \ref{Pnmd}
\begin{equation}
\label{simple}
P(n,m,d)\ge 
\left\lceil \frac{n-m+1}{d}\right\rceil
\cdot
\left\lceil \frac{n-m+2}{d}\right\rceil
\cdot\dots\cdot
\left\lceil \frac{n}{d}\right\rceil.
\end{equation}

Note that the lower bound (3) can be improved in a manner similar to that done for $P(n,d)$.
Computed values of $P(n,m,d)$ are shown in Tables \ref{Pnm8-9}, \ref{Pnm10-11}, \ref{Pnm12-13}, and \ref{Pnm14-15}. Others can be seen in Tables \ref{Just1} and \ref{Just2}.

We denote by $\id$ the identity permutation $(1,2,\dots,n)$.

\begin{prop}
\label{Pnmd2}
$P(n,m,d)\ge 2$ if $d\le 
mn-m(m+1)/2$. The bound for $d$ is tight for all $n>m\ge 1$.
\end{prop}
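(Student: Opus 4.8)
The plan is to identify the exact maximum Kendall-$\tau$ distance attainable between two members of $S_{n,m}$ and show it equals $mn-m(m+1)/2$; since $P(n,m,d)\ge 2$ holds precisely when some pair in $S_{n,m}$ is at distance at least $d$ (and $P(n,m,d)=1$ otherwise), both the inequality and its tightness follow at once.

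First I would establish the upper bound. Writing $d(\sigma,\pi)$ as the number of \emph{discordant} pairs --- pairs of symbols occurring in opposite relative order in $\sigma$ and in $\pi$ --- I note that, by the definition of $S_{n,m}$, the symbols $1,\dots,n-m$ occur in increasing order in every permutation of $S_{n,m}$. Hence, for $\sigma,\pi\in S_{n,m}$, a discordant pair must involve at least one of the $m$ ``large'' symbols $n-m+1,\dots,n$. Counting gives at most $\binom{m}{2}$ large--large pairs and $m(n-m)$ large--small pairs, so $d(\sigma,\pi)\le \binom{m}{2}+m(n-m)=mn-m(m+1)/2$ for all $\sigma,\pi\in S_{n,m}$. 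In particular, if $d>mn-m(m+1)/2$ then no two permutations of $S_{n,m}$ are at distance $\ge d$, so $P(n,m,d)=1$; this is the tightness claim.

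Next I would exhibit a pair realizing the bound. Take $\rho=\id=(1,2,\dots,n)$ and $\pi=(n,n-1,\dots,n-m+1,1,2,\dots,n-m)$, i.e. the large symbols placed in decreasing order in the first $m$ positions followed by the small symbols in increasing order; both lie in $S_{n,m}$ since the small symbols are sorted in each. In $\rho$ every small symbol precedes every large symbol and the large symbols are increasing, whereas in $\pi$ every large symbol precedes every small symbol and the large symbols are decreasing; so all $\binom{m}{2}$ large--large pairs and all $m(n-m)$ large--small pairs are discordant, giving $d(\pi,\rho)=mn-m(m+1)/2$. Thus $\{\pi,\rho\}$ is an $(n,m,d)$-array whenever $d\le mn-m(m+1)/2$, so $P(n,m,d)\ge 2$.

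There is no serious obstacle here; the one point worth isolating is the observation that a single permutation can at once reverse the internal order of the large symbols and slide all of them past all of the small symbols while leaving the small symbols sorted --- this is exactly what makes the crude counting bound achievable and keeps $\pi$ inside $S_{n,m}$. The remainder is a routine enumeration of discordant pairs.
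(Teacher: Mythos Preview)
Your proof is correct and uses the same witness pair $(\id,\pi)$ as the paper; the paper computes $d(\id,\pi)$ by counting bubble-sort moves symbol by symbol rather than discordant pairs, but the two counts are equivalent. One point worth noting: for tightness the paper only asserts that every $\sigma\in S_{n,m}$ with $\sigma\ne\pi$ satisfies $d(\id,\sigma)<mn-m(m+1)/2$, i.e.\ it bounds distances \emph{from the identity}, whereas your discordant-pair count bounds $d(\sigma,\pi)$ for \emph{arbitrary} $\sigma,\pi\in S_{n,m}$, which is what the tightness claim actually needs---so your argument is slightly more complete on that front.
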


\begin{proof}
Let $\pi=(n,n-1,\dots,n-m+1,1,2\dots,n-m)$. 
The bubble sort for $\pi$ uses $n-1$ transpositions for symbol $n$, $n-2$ transpositions for symbol $n-1$, etc. Then   
$
d(\id,\pi)=(n-1)+(n-2)+\dots +(n-m)
=nm-(1+2+\dots+m)=mn-m(m+1)/2$.

The bound is tight since for any permutation $\sigma\ne \pi$, $d(\id,\sigma)<mn-m(m+1)/2$.
\end{proof}

We improve the bound in Equation \ref{simple} for $m=2$. 

\begin{theorem}
\label{Pn2d}
For any $d\ge 1$,
(a) $P(n,2,d)\ge 3$, if $d\le n+\floor{n/3}-2$, and
(b) $P(n,2,d)\ge 5$ if $d\le n-2$. 
\end{theorem}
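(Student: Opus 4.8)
The plan is to translate both parts into a clean statement about lattice points. Since a permutation $\pi\in S_{n,2}$ places the symbols $1,\dots,n-2$ in increasing order among the positions left free by $n$ and $n-1$, it is determined by the triple $(u_\pi,v_\pi,\epsilon_\pi)$, where $u_\pi$ (resp.\ $v_\pi$) is the number of symbols of $\{1,\dots,n-2\}$ occurring before $n$ (resp.\ before $n-1$) in $\pi$, and $\epsilon_\pi=1$ if $n-1$ precedes $n$ in $\pi$ and $\epsilon_\pi=0$ otherwise. Every triple in $\{0,\dots,n-2\}^2\times\{0,1\}$ satisfying the compatibility condition ``$\epsilon_\pi=1\Rightarrow u_\pi\ge v_\pi$ and $\epsilon_\pi=0\Rightarrow u_\pi\le v_\pi$'' arises from exactly one such $\pi$; e.g.\ $\id\leftrightarrow(n-2,n-2,1)$ and $(n,n-1,1,2,\dots,n-2)\leftrightarrow(0,0,0)$.

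The crux is the distance formula
\[
 d(\pi,\rho)=|u_\pi-u_\rho|+|v_\pi-v_\rho|+\chi(\pi,\rho),\qquad
 \chi(\pi,\rho)=\begin{cases}1,&\epsilon_\pi\ne\epsilon_\rho,\\ 0,&\epsilon_\pi=\epsilon_\rho.\end{cases}
\]
I would prove it by counting, over all unordered pairs of symbols, those inverted between $\pi$ and $\rho$. Pairs inside $\{1,\dots,n-2\}$ are increasing in both permutations, contributing $0$. For a pair $\{n,k\}$ with $k\le n-2$: the small symbols precede $n$ in $\pi$ in increasing order, so exactly $1,\dots,u_\pi$ precede it; hence $n$ precedes $k$ iff $k>u_\pi$, and $\{n,k\}$ is inverted between $\pi$ and $\rho$ precisely for the $|u_\pi-u_\rho|$ values of $k$ lying strictly between $u_\pi$ and $u_\rho$. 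Pairs $\{n-1,k\}$ give $|v_\pi-v_\rho|$ the same way, and $\{n-1,n\}$ gives $\chi(\pi,\rho)$. This tallying is the only real work; once it is done, both parts follow from explicit lists of triples.

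For (b), put $N=n-2$ and take the five triples $(0,0,0)$, $(N,N,0)$, $(N,0,1)$, $(0,N,0)$, $(\floor{N/2},\floor{N/2},1)$; they are compatible and pairwise distinct. The first four have pairwise $\ell_1$ distance at least $N$ in the first two coordinates, hence Kendall distance at least $N$; the formula gives the distances from the fifth triple to the first four as $2\floor{N/2}+1$, $2\ceil{N/2}+1$, $N$, and $N+1$, each $\ge N$. Since $N=n-2\ge d$, all ten pairwise distances are at least $d$, so $P(n,2,d)\ge5$.

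For (a), put $N=n-2$, $s=\floor{n/3}$, and take the three triples $\pi_1=(0,0,0)$, $\pi_2=(N,s-1,1)$, $\pi_3=(s,N,0)$. They are compatible and distinct for $n\ge3$ (here $1\le s\le N$), and the formula gives $d(\pi_1,\pi_2)=N+s$, $d(\pi_1,\pi_3)=N+s$, and $d(\pi_2,\pi_3)=2(N-s)+2$. Now $N+s=n+\floor{n/3}-2$, while $2(N-s)+2\ge N+s$ is equivalent to $3\floor{n/3}\le n$, which always holds; hence all three distances are $\ge d$ whenever $d\le n+\floor{n/3}-2$, giving $P(n,2,d)\ge3$. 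The step I expect to need the most care is the distance formula; after that, the only idea in (a) is to break the symmetry between $\pi_2$ and $\pi_3$ by the offsets $s-1$ and $s$, which is exactly what lets one construction cover all residues of $n$ modulo $3$. (If concrete witnesses are wanted instead of triples, each triple unwinds to an explicit permutation, e.g.\ $\pi_2$ is $1,\dots,s-1,\,n-1,\,s,\dots,n-2,\,n$.)
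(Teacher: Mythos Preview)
Your proof is correct and follows essentially the same route as the paper: exhibit explicit elements of $S_{n,2}$ and verify the pairwise distances. The paper's three permutations for (a) and five for (b) correspond, up to shifts of $\pm 1$ in your coordinates $(u,v,\epsilon)$, to your triples (e.g.\ their $\tau_2,\tau_3$ in (a) are $(s-1,N,0)$ and $(N,s,1)$ in your notation, versus your $(s,N,0)$ and $(N,s-1,1)$).

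Two points where your write-up is cleaner than the paper's: you isolate the distance formula $d(\pi,\rho)=|u_\pi-u_\rho|+|v_\pi-v_\rho|+\chi(\pi,\rho)$ as a standalone lemma, which turns all the verifications into one-line arithmetic, whereas the paper re-derives each distance by ad hoc transposition counting; and your choice $(\lfloor N/2\rfloor,\lfloor N/2\rfloor,1)$ for the fifth permutation in (b) works uniformly, whereas the paper splits into the cases $n=2k$ and $n=2k+1$ with slightly different middle permutations. Neither difference changes the underlying idea.
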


\begin{proof}
(a) Let $\tau_1=(n-1,n,1,2,\dots,n-2), \tau_2=(1,\dots,x-1,n-1,x,\dots,n-2,n)$
and $\tau_3=(1,\dots,x,n,x+1,\dots,n-1)$ where $x=\floor{n/3}$, see an example in Table \ref{t:9-2-10}.
Transformation of $\tau_1$ to $\tau_2$ requires $n-1$ transpositions for symbol $n-1$ and $x-1$ transpositions for symbol $n$.
Then $d(\tau_1,\tau_2)=n+x-2\ge d$. 
Similarly $d(\tau_1,\tau_3)=(n-2)+x\ge d$, and
$d(\tau_2,\tau_3)=(n-x)+(n-x-2)=2n-2x-2\ge n+x-2\ge d$.

{
\begin{table}[htb]
\centering
\setlength{\tabcolsep}{3pt}
\begin{tabular}{|c|rrrrrrrrr|}
 \hline
  & 1 & 2 & 3 & 4 & 5 & 6 & 7 & 8 & 9 \\
\hline
$\tau_1$ & 8 & 9 & 1 & 2 & 3 & 4 & 5 & 6 & 7\\
$\tau_2$ & 1 & 2 & 9 & 3 & 4 & 5 & 6 & 7 & 8\\
$\tau_3$ & 1 & 2 & 3 & 8 & 4 & 5 & 6 & 7 & 9\\
\hline
\end{tabular}
\vspace{5pt}
\caption{$P(9,2,10)\ge 3$.}
\label{t:9-2-10}
\end{table}
}

(b) 
Suppose $n=2k$. Consider 5 permutations $\tau_i,i=1,\dots,5$ where symbols $n-1$ and $n$ are placed at positions 
1 and 2 for $\tau_1$,
$n-1$ and $n$ for $\tau_2$,
$k$ and $k+1$ for $\tau_3$,
$1$ and $n$ for $\tau_4$,
$n$ and $1$ for $\tau_5$, see an example in Table \ref{t:12-2-10}.
We show that $d(\tau_i,\tau_j)\ge n-2$ if $1\le i<j\le 5$. 
For all pairs $i,j\in \{1,2,4,5\}$ with $i<j$, transformation of $\tau_i$ to $\tau_j$ 
requires $n-2$ transpositions for only one of two symbols $n-1$ or $n$.
Transformation of $\tau_3$ to any $\tau_i,i=1,2,4$ requires $k-1$ transpositions for each symbol $n-1$ and $n$.
Transformation of $\tau_3$ to any $\tau_5$ requires $k-1$ transpositions for each symbol $n-1$ and $n$ after transposition of $n-1$ and $n$.

Similarly, a $(n,2,n-2)$-array can be constructed for $n=2k+1$ where symbols $n$ and $n-1$ are placed at positions 
$k$ and $k+1$ for $\tau_3$, see an example in Table \ref{t:13-2-11}.
\end{proof}

{ 
\begin{table}[htb]
\centering
\setlength{\tabcolsep}{3pt}
\begin{tabular}{|c| r r rrrrrrrrrr|}
 \hline
  & 1 & 2 & \ 3 & \  4 & \ 5 & 6 & 7 & \ 8 & \ 9 & 10 & 11 & 12\\
\hline
$\tau_1$ & 11 & 12 & 0 & 0 & 0 & 0 & 0 & 0 & 0 & 0 & 0 & 0\\
$\tau_2$ & 0 & 0 & 0 & 0 & 0 & 0 & 0 & 0 & 0 & 0 & 11 & 12\\
$\tau_3$ & 0 & 0 & 0 & 0 & 0 & 11 & 12 & 0 & 0 & 0 & 0 & 0\\
$\tau_4$ & 11 & 0 & 0 & 0 & 0 & 0 & 0 & 0 & 0 & 0 & 0 & 12\\
$\tau_5$ & 12 & 0 & 0 & 0 & 0 & 0 & 0 & 0 & 0 & 0 & 0 & 11\\
\hline
\end{tabular}
\vspace{5pt}
\caption{$P(12,2,10)\ge 5$. The first 10 symbols in all $\tau_i$ are in the sorted order and replaced by zeros.
}
\label{t:12-2-10}
\end{table}
}

{ 
\begin{table}[htb]
\centering
\setlength{\tabcolsep}{3pt}
\begin{tabular}{|c| r r rrrrrrrrrrr|}
 \hline
  & 1 & 2 & \ 3 & \  4 & \ 5 & 6 & 7 & \ 8 & \ 9 & 10 & 11 & 12 & 13\\
\hline
$\tau_1$ & 12 & 13 & 0 & 0 & 0 & 0 & 0 & 0 & 0 & 0 & 0 & 0 & 0\\
$\tau_2$ & 0 & 0 & 0 & 0 & 0 & 0 & 0 & 0 & 0 & 0 & 0 & 12 & 13\\
$\tau_3$ & 0 & 0 & 0 & 0 & 0 & 13 & 12 & 0 & 0 & 0 & 0 & 0 & 0\\
$\tau_4$ & 12 & 0 & 0 & 0 & 0 & 0 & 0 & 0 & 0 & 0 & 0 & 0 & 13\\
$\tau_5$ & 13 & 0 & 0 & 0 & 0 & 0 & 0 & 0 & 0 & 0 & 0 & 0 & 12\\
\hline
\end{tabular}
\vspace{5pt}
\caption{An example for $P(13,2,11)\ge 5$.}
\label{t:13-2-11}
\end{table}
}

We have constructed a program for computing $P(n,m,d)$ for various values of $n,m,$ and $d$. For each of the $\binom{n}{m}$ positions for $m$ symbols out of $n$, and each of the possible $m!$ orders of the $m$ symbols, the program uses the random/Greedy strategy described earlier. 
That is, it chooses a specified number of random choices first and then tries adding all remaining possible permutations in increasing order.
When $m$ is small, the program finds solutions quickly. It allows one to compute $P(15,12)$, for example,  without examining all 15! permutations of 15 symbols. That is, by Theorem \ref{gen} one can first compute, for example, $P(15,6,12)$, which as shown in Table \ref{Pnm12-13} is at least 622, and then compute $P(12,12)$.

We prove an upper bound for $P(n,2,3)$. 

\begin{theorem}
\label{uP_n_2_3}
For all $n\ge 4$,
$P(n,2,3)\le n^2/5+5.8n-12$. 
\end{theorem}

\begin{proof} 
There are $n^2-n$ permutations in $S_{n,2}$. 
We assume $n\ge 4$.
Let $R_1$ be the set of permutations $\pi\in S_{n,2}$ such that $\pi(1)\in\{n-1,n\}$ or $\pi(n)\in\{n-1,n\}$. 
Let $R_2$ be the set of permutations $\pi\in S_{n,2}$ such that $|\pi^{-1}(n-1)-\pi^{-1}(n)|=1$.\footnote{$\pi^{-1}(n-1)$ and $\pi^{-1}(n)$ are the positions of $n-1$ and $n$ in $\pi$, respectively.}
Let $R=R_1\cup R_2$ and $R_{12}=R_1\cap R_2$.
Then
\[
|R|=|R_1|+|R_2|-|R_{12}|=(4n-6)+(2n-2)-4=6n-12.
\]

For a permutation $\pi\in S_{n,2}$, let $B_1(\pi)$ be the set of permutations $\sigma\in S_{n,2}$ such that $d(\pi,\sigma)\le 1$.
If $\pi\in S_{n,2}\setminus R$ then $|B_1(\pi)|=5$.
Let $A$ be an $(n,2,3)$-array. 
Then the balls of radius one $B_1(\pi), \pi\in A$ are disjoint. 
The union of balls $B_1(\pi), \pi\in A\setminus R$ has size $5\cdot |A\setminus R|\le n^2-n$.
Therefore $|A|\le (n^2-n)/5+|R|=n^2/5+5.8n-12$.
\end{proof}

The upper bound of Theorem \ref{uP_n_2_3} is the Gilbert-Varshamov type bound for $d=3$. 
It can be generalized to any odd $d\ge 3$ and any $m\ge 2$. 
Let $V_{m,k}=|\{x\in \Z^m~|~ \sum_{i=1}^m |x_i|\le m\}$. 

\begin{theorem}
\label{uP_n_m_d}
For all $m\ge 2$ and $k\ge 1$,
$P(n,m,2k+1)\le n^m/V_{m,k}+O(n^{m-1})$. 
\end{theorem}

\begin{proof} 
We assume that $m$ and $k$ are constants.
Let $d=2k+1$.
There are $n(n-1)\dots (n-m+1)$ permutations in $S_{n,m}$. 
Every permutation $\pi$ in $S_{n,m}$ can be specified as $m$-tuple $p=(p_1,p_2,\dots,p_m)$ such that $\pi(p_1)=n-m+1,\dots,\pi(p_m)=n$, i.e. $p_1,p_2,\dots,p_m$ are the positions of symbols $n-m+1,\dots,n$, respectively.  
Let $R$ be the set of $m$-tuples $p$ such that $|p_i-p_j|\le d$ for some two positions $p_i$ and $p_j$, $1\le i<j\le m$. 
Then $|R|=O(n^{m-1})$.

For a permutation $\pi\in S_{n,m}$, let $B_k(\pi)$ be the set of permutations $\sigma\in S_{n,m}$ such that $d(\pi,\sigma)\le k$.
If $\pi\in S_{n,m}\setminus R$ then $|B_k(\pi)|=V_{m,k}$.
Let $A$ be an $(n,m,d)$-array. 
Then the balls of radius $k$ $B_k(\pi), \pi\in A$ are disjoint. 
The union of balls $B_k(\pi), \pi\in A\setminus R$ has size $V_{m,k}\cdot |A\setminus R|\le |S_{n,m}|$.
Therefore $|A|\le |S_{n,m}|/V_{m,k}+|R|=n^m/V_{m,k}+O(n^{m-1})$.
\end{proof}

One can show that $V_{m,1}=2m+1$ for $m\ge 2$. Then
\begin{equation}
\label{uP_n_m_3}
P(n,m,3)\le n^2/(2m+1)+O(n^{m-1}).    
\end{equation}
Since $V_{2,2}=|\{(0,\pm 2),(\pm 2,0), (a,b) ~:~ |a|,|b|\le 1\} |=13$, 
\begin{equation}
\label{uP_n_m_3}
P(n,2,5)\le n^2/13+O(n).    
\end{equation}

As shown in Tables \ref{better2a} and \ref{better2b}, Theorems \ref{gen} and \ref{better}
 are useful for obtaining improved lower bounds for $P(n,d)$ when the Kendall-$\tau$ distance $d$ is close to $n$. Some of the improvements are substantial. For example, we obtain $8,413,437$ as a lower bound for $P(16,11)$, whereas Theorem \ref{Ge} gives a lower bound of $1,700,585$. We also give lower bounds for $P(n,m,d)$, for $8 \le d \le 15$ and $10 \le n \le 20$ in Tables \ref{Pnm8-9}, \ref{Pnm10-11},
\ref{Pnm12-13}, and \ref{Pnm14-15}, which are given in the appendix. 


Many of the improved lower bounds given in Tables \ref{better2a} and \ref{better2b} are explained in Tables \ref{Just1} and \ref{Just2}.
Some computations took a week (or more) on Apple MacBook Air computers with an M1 or M2 processor.
Most of the results are obtained by Theorem \ref{better} using a computation of $P(n,m,d)$. There are many choices for the value of $m$. Most of our results are with $m=8$. 

In Tables \ref{better2a} and \ref{better2b}, "previous" results are given by Theorem \ref{Ge}, with the use of Theorems \ref{Even}, \ref{Jiang2}, and \ref{Jiang}, when appropriate. 
Tables \ref{Just1} and \ref{Just2} give the methods we used to obtain improved lower bounds.

\begin{table}[htb]
\setlength{\tabcolsep}{2pt}
    \centering
    \begin{tabular}{|c|c|c|c|c|c|c|c|c|c|c|}
    \hline
    n:d &5&6&7&8&9&10&11\\ 
    \hline    \hline
    12& {\bf 899,809}&{\bf 595,160}&{\bf 129,298}&{\bf 85,091}&{\bf 73,105}&see Table 2&see Table 2\\
    (previous)&(720,304)&(360,152)&(46,741)&(23,371)&(3,305)&&\\
    \hline
    13&-&-&{\bf 629,301}&{\bf 520,253}&{\bf 236,764}&{\bf 158,208}&{\bf 51,046}\\ 
    (previous)&(9,363,942)&(4,681,971)&(607,632)&(303,816)&(42,962)&(21,481)&(3,196)\\
    \hline
    14 & - & - & {\bf 6,522,803} & {\bf 3,693,995} & {\bf 930,601} & {\bf 571,415} & {\bf 177,098}\\
(previous) & Theorem 2* & Theorem 2* & (5,232,791)&(2,616,396)&(313,063)&(156,532)&(19,704)\\
\hline
    
    15&-&-&-&-&{\bf 6,846,611}&{\bf 3,878,969}&{\bf 1,182,803}\\

    (previous)&Theorem 2*&Theorem 2*&Theorem 2*&(39,245,930)&(4,695,943)&(2,347,972)&(295,549)\\
    \hline
    16&-&-&-&-&-&{\bf 30,193,558}&{\bf 8,413,437}
    \\

    (previous)&Theorem 2*&Theorem 2*&Theorem 2*&Theorem 2*&(33,259,910)&(16,629,955)&(1,700,585)\\
    \hline
    
    17&-&-&-&-&-&-&{\bf 66,863,784}\\
    
    (previous)&Theorem 2*&Theorem 2*&Theorem 2*&Theorem 2*&Theorem 2*&Theorem 2*&(28,909,942)
    \\
    \hline
    18&-&-&-&-&-&-&-\\
    (previous)&Theorem 2*&Theorem 2*&Theorem 2*&Theorem 2*&Theorem 2*&Theorem 2*&(520,378,955)\\
    \hline
    \end{tabular}
\vspace{5pt}
    \caption{Improved lower bounds for $P(n,d)$ using Theorem \ref{better}, for $5 \le d \le 11$. Previous results from "Theorem 2*", meaning Theorem \ref{Ge}, with appropriate use of Theorems \ref{Even}, \ref{Jiang2}, and \ref{Jiang}. 
    }
    \label{better2a}
\end{table}

\begin{table}[htb]
\setlength{\tabcolsep}{2pt}
    \centering
    \begin{tabular}{|c|c|c|c|c|c|c|c|}
    \hline
    n:d &12&13&14&15&16&17&18\\ 
    \hline    \hline
    12&see Table 2&see Table 2&see Table 2&see Table 2&see Table 2&see Table 2&{\bf 418}\\
    (previous)&&&&&&&(1)\\
    \hline
    13&{\bf 29,859}&{\bf 14,158}&{\bf 10,756}&{\bf 5,527}&{\bf 4,322}&{\bf 1,024}&{\bf 771}\\  
    (previous)&(1,598)&(246)&(123)&(20)&(10)&(2)&(1)\\
    \hline
    14&{\bf 112,338}&{\bf 55,730}&{\bf 41,673}&{\bf 15,674}&{\bf 8,941}&{\bf 4,429}&{\bf 3,190}\\
    (previous)&(9,852)&(1,283)&(642)&(86)&(43)&(6)&(3)\\
    \hline
    15&{\bf 706,114}&{\bf 190,218}&{\bf 159,967}&{\bf 66,194}&{\bf 44,416}&{\bf 20,842}&{\bf 14,610}\\
    (previous)&(147,775)&(19,237)&(9,619)&(1,283)&(642)&(88)&(44)\\
    \hline
    16&{\bf 4,977,819}&{\bf 1,665,481}&{\bf 1,043,093}&{\bf 394,158}&{\bf 259,662}&{\bf 111,714}&{\bf 77,044}\\
(previous)&(850,293)&(89,935)&(44,968)&(4,872)&(2,436)&(269)&(135)\\
    \hline
    17&{\bf 38,745,418}&{\bf 12,013,922}&{\bf 7,398,247}&{\bf 2,657,379}&{\bf 1,706,757}&{\bf 687,795}&{\bf 462,163}\\
    (previous)&(14,454,970)&(1,528,892)&(764,446)&(82,813)&(41,407)&(4,567)&(2,284)\\
    \hline
    18&-& {\bf 96,452,048}&{\bf 57,732,698}&{\bf 19,618,333}&{\bf 12,411,066}&{\bf 4,671,851}&{\bf 3,099,772}\\
    (previous)&(260,189,477)&(27,520,040)&(13,760,020)&(1,490,629)&(745,315)&(82,206)&(41,103)\\
    \hline
    19&-&-&-&{\bf 39,236,666}&{\bf 24,822,132}&{\bf 9,343,702}&{\bf 6,199,544}\\
    (previous)&Theorem 2*
    &Theorem 2*&Theorem 2*&(18,600,815)&(9,300,408)&(965,441)&(482,721)\\
    \hline
    \end{tabular}
\vspace{5pt}
    \caption{Improved lower bounds for $P(n,d)$ using Theorem \ref{better}, for $12 \le d \le 18$. Previous results from "Theorem 2*", meaning Theorem \ref{Ge}, with appropriate use of Theorems \ref{Even}, \ref{Jiang2}, and \ref{Jiang}. }
    \label{better2b}
\end{table}

\begin{table}[htb]
\setlength{\tabcolsep}{2pt}
    \centering
    \begin{tabular}{|c|c|c|c|c|c|c|c|}
    \hline
    n:d &5&6&7&8&9&10&11\\ 
    \hline    \hline
    12&$\displaystyle \sum_{i  \in A} P(n,5;i)$&$m=8$
    &$\displaystyle
    \sum_{i \in A} P(n,7;i)$
    &$\displaystyle \sum_{i \in A} P(n,8;i)$&
    $\displaystyle \sum_{i \in A} P(n,9;i)$&-&-\\
    &A=\{2,7,12\}&|A|=81,954&$A=\{3,10\}$&$A=\{3,11\}$&$A=\{3,12\}$&&\\
    \hline
    13&&&$m=8$&$m=8$&$\displaystyle
    \sum_{i \in A}P(n,9;i)$&$\displaystyle \sum_{i \in A} P(n,d;i)$&$\displaystyle \sum_{i \in A} P(n,d;i)$\\
    &&&$|A|=12,604$&$|A|=81,954$&$A=\{3,12\}$&$A=\{2,12\}$&$A=\{2,13\}$\\
    \hline
    14&&&&$m=8$&$m=8$&$m=8$&$m=8$\\  
    &&&&$|A|=14,779$&$|A|=26,300$&$|A|=18,620|$&$|A|=7,909$\\
    \hline
    15&&&&&$m=8$&$m=8$&$m=8$\\
    &&&&&$|A|=50,126$&$|A|=35,264$&$|A|=14,715$\\
    \hline
    16&&&&&&$m=8$&$m=8$\\
    &&&&&&$|A|=63,538$&$|A|=26,075$\\
    \hline
    17&&&&&&&$m=8$\\
    &&&&&&&$|A|=44,489$\\
    
    \hline
    \end{tabular}
\vspace{5pt}
    \caption{Methods used to obtain lower bounds. An entry "$m=8$", for example,  means the lower bound was obtained computing an $(n,8,d)$-array A and then computing $\displaystyle \sum_{\tau  \in A} P_{\tau}(n,d)$. }
    \label{Just1}
\end{table}

\begin{table}[htb]
\setlength{\tabcolsep}{2pt}
    \centering
    \begin{tabular}{|c|c|c|c|c|c|c|c|}
    \hline
    $n$:$d$ &12&13&14&15&16&17&18\\ 
    \hline  \hline
    13&$P(n,d;7)$&$P(n,d;7)$&$P(n,d;7)$&$P(n,d;7)$&$P(n,d;7)$&$m=8$&$m=8$\\
    &&&&&&$|A|=$ 316&$|A|=$ 258\\
    \hline
    14&$m=8$&
$\Sigma\{P(n,d;i,j)\}$ &$\Sigma\{P(n,d;i,j)\}$&$\Sigma
\{P(n,d;i,j)\}$&$m=8$&$m=8$&$m=8$\\ 
    &&
    $|\{(i,j) \in A\}|$ &
    $|\{(i,j) \in A\}|$ &
    $|\{(i,j) \in A\}$ &&&\\
    &$|A|=$ 5,880&$A=\{(3,7),$&$A=\{(1,3),$&$A=\{(2,3),$&$|A|=962$&$|A|=576$&$|A|=472$\\
    && $(13,14),(2,12)\}$&$(4,14),(6,11)\}$&$(6,14),(14,6)\}$&&&\\
  \hline
  15 & $m=8$ & $m=7$ & $m=8$ & $m=8$ & $m=8$ & $m=8$ & $m=8$\\  
    &$|A|=$ 10,860&$|A|=$ 1,382&$|A|=$ 4,017&$|A|=$ 2,165&$|A|=$ 1,719&$|A|=$ 1,012&$|A|=$ 821\\
    \hline
    16&$m=8$&$m=8$&$m=8$&$m=8$&$m=8$&$m=8$&$m=8$\\
    &$|A|=19,215$&$|A|=9,139$&$|A|=6,983$&$|A|=3,705$&$|A|= 2,926$&$|A|=1,708$&$|A|=1,385$\\
    \hline
    17& $m=8$ & $m=8$ & $m=8$ & $m=8$&$m=8$ & $m=8$ & $m=8$\\
    &$|A|=$ 32,647&$|A|=$ 15,287&$|A|=$ 11,647&$|A|=$ 6,134&$|A|=$ 4,841&$|A|=$ 2,776&$|A|=$ 2,249\\
    \hline
    18&&$m=8$&$m=8$&$m=8$&$m=8$&$m=8$&$m=8$\\
    &&$|A|=$ 24,912&$|A|=$ 18,914&$|A|=$ 9,856&$|A|=$ 7,722&$|A|=$ 4,413&$|A|=$ 3,551\\
    
    \hline
    \end{tabular}
\vspace{5pt}
    \caption{Methods used to obtain lower bounds. An entry "$m=8$", for example,  means the lower bound was obtained computing an $(n,8,d)$-array A and then computing $\displaystyle \sum_{\tau  \in A} P_{\tau}(n,d)$. }
    \label{Just2}
\end{table}

\section{Automorphism Lower Bounds}

It is known that for a permutation $\pi(x):\F_q \to\F_q$, where $\F_q$ denotes a finite field of order $q$, the operations of multiplying by a non-zero constant $a$, adding a constant $c$, and adding to the argument a constant $b$, each yields another permutation on $\F_q$. 
This is a well-known equivalence relation on permutation polynomials. That is, $a\pi(x+b)+c$, for all non-zero $a$ and all $b,c \in \F_q$, is again a permutation. We use this to search for sets of permutations at specified Kendall-$\tau$ distance $d$.
That is, the search can be done for a set of representative permutations and expanded into a full set of permutations using operations on the representatives.
Our program verifies that the full set of permutations has the stipulated Kendall-$\tau$ distance.

\newpage

\begin{itemize}
    \item 
Use the operation $\pi(x)+c$ on the 17 representatives shown in Table \ref{6a3}. This gives 102 permutations for $P(6,3)$.

\item
Use the operations $a\pi(x)+c$ on the 14 representatives given in Table \ref{9a7}. 
This gives $1,008$ permutations for $P(9,7)$.

\item
Use the operations $a\pi(x)+c$ on the 8 representatives given in Table \ref{9a8}. 
This gives 576 permutations for $P(9,8)$.

\item
Use the operations $a\pi(x)+c$ on the four representatives given in Table \ref{9a9}.
This gives $288$ permutations for $P(9,9)$.

\item
Use the operations $\pi(x)+c$ on the 12 representatives given in Figure \ref{7a6}.
This gives $84$ permutations for $P(7,6)$.

\item 
Use the operation $a\pi(x)+c$ on the 8 representatives given in Figure \ref{8a6}.
This gives $448$ permutations for $P(8,6)$. 

\item 
Use the operation $a\pi(x)+c$ on 67 representatives given in Figure \ref{8a3}. This gives 3,752 permutations for $P(8,3)$.

\item 
Use the operation $a\pi(x)+c$ on 12 representatives given  in Table \ref{8a5}. This gives 672 permutations for $P(8,5)$.

\item
Use the operation $a\pi(x)+c$ on 40 representatives given in Table \ref{8a4} representatives.  This gives 2,242 permutations for $P(8,4)$.

\item
Use the operation $a\pi(x)+c$ on 3 representatives given in Table \ref{8a7}. 
This gives $168$ permutations for $P(8,7)$. 

\item
Use the operation $\pi(x)+c$ on the 48 permutations given in Table \ref{7a4}. This gives $336$ permutations for $P(7,4)$.

\item
Use the operation $\pi(x)+c$ on the 18 permutations given in Table \ref{7a5}. This gives $126$ permutations for $P(7,5)$.

\end{itemize}

\newpage

\section{Conclusions and Open Questions}

Theorems \ref{gen} and \ref{better} give many improved lower bounds. Tables  \ref{Automorphism}, \ref{Random}, \ref{better2a}, and \ref{better2b} give improvements on previous results.
As previously stated in Section \ref{sec:intro}, lower bounds obtained by our recursive technique can be much larger than those given by Theorem \ref{Ge}.

Our work on good patterns for $(n,m,d)$-arrays continues. 
We conjecture that $(n,m,d)$-arrays can be used to compute other improved lower bounds for $P(n,d)$.
Another interesting direction for future research is upper bounds for $P(n,m,d)$. 
We conjecture that some lower bounds in Tables \ref{Pnm8-9}, \ref{Pnm10-11}, \ref{Pnm12-13}, and \ref{Pnm14-15} are tight.
An interesting open problem is the asymptotic behavior of $P(n,2,3)$. 
If $P(n,2,3)/n^2$ tends to a constant $c$, what is the value of $c$? By Theorems \ref{P_n_2_3} and \ref{uP_n_2_3}, $\frac 16\le c\le \frac 15$. 



\bigskip

\bigskip

\begin{table}[htb]
\centering
\begin{tabular}
{ccc}
\begin{tabular}{|c|r|c|c|c|c|}
\hline
n:m& 2& 3&4&5&6\\ 
\hline    \hline
10 & 5&14&37&113&335 \\ 
11& 5 &16&55&186&645 \\ 
12 & 6 &21&73&285& 1145\\ 
13& 6&26&99&428& 1920 \\
14& 8&31 &130&625&3117\\
15 & 8 & 37 &172&884&4872\\
16 & 10 &45&219&1233&7367 \\
17 & 10 & 52 & 278 & 1676 & 10828\\
18 & 13 & 61 & 344 & 2227 & 15567\\
19 & 13 & 71 & 426 & 2939 & 21862\\
20 & 15 & 80 & 517 & 3805 & 30196\\
\hline
\end{tabular} 

&

\begin{tabular}{|c|r|c|c|c|c|}
\hline
 n:m& 2& 3&4&5&6\\ 
\hline    \hline
10 & 3&9&24&63&162 \\ 
11& 5 &15&34&99&301 \\ 
12 & 5 &16&46&149& 523\\ 
13& 6&18&59&219&861 \\
14& 6& 22&78&315&1383 \\
15 & 7 &26 &100&445&2119\\
16 &8&31&128&610&3165\\
17&8&36&162&824&4613\\
18&10&42&201&1097&6589\\
19&10&49&244&1427&9179\\
20 & 12 & 55 & 292 & 1827 & 12581\\
\hline
\end{tabular}
\end{tabular}
\vspace{5pt}
\caption{Lower bounds for $P(n,m,8)$ (left) and $P(n,m,9)$  (right).}
\label{Pnm8-9}
\end{table} 

{\bf Appendix}

\begin{table}[htb]
\centering
\begin{tabular}
{ccc}
\begin{tabular}{|c|r|c|c|c|c|}
\hline
$n:m$ & 2& 3&4&5&6\\ 
\hline    \hline
10 & 3&7&19&48& 125\\ 
11& 5 &10&27&76&226 \\ 
12 & 5 &13&37&116&394 \\ 
13& 6&16&50&167& 644\\
14& 6&18 &64&241&1011 \\
15 & 6 &21&83&342&1570\\
16 &6&25&103&467&2337\\
17&8&30&129&629&2239\\
18&8&35&158&829&3185\\
19&10&40&192&1084&4405\\
20&10&46&233&4184&6017\\
\hline
\end{tabular}
&
\begin{tabular}{|c|r|c|c|c|c|}
\hline
$n:m$ & 2& 3&4&5&6\\ 
\hline    \hline
10 &3&6&13&27& 73\\ 
11& 3 &7&16&41& 128\\ 
12 & 3 &10&22&61& 214\\ 
13& 5 & 11 & 31 & 96 & 344\\
14& 5 & 13 &37&120&539 \\
15 &5 &17&55&163&810\\
16 &6&20&70&220&1193\\
17&6&23&86&366&1716\\
18&7&26&106&472&2413\\
19&8&31&127&618&3362\\
20&8&35&151&789&4571\\
\hline
\end{tabular}
\end{tabular}
\vspace{5pt}
\caption{Lower bounds for $P(n,m,10)$ (left)  and $P(n,m,11)$ (right).}
\label{Pnm10-11}
\end{table}

\begin{table}[htb]
\centering
\begin{tabular}{ccc}
\begin{tabular}{|c|r|c|c|c|c|}
\hline
$n:m$ & 2& 3&4&5&6\\ 
\hline    \hline
10 &2 &6&13&26&58\\ 
11& 3 &7&17&40&101 \\ 
12 & 3 &9&23&59&168\\ 
13&3 &10&30&84&273\\
14&5 &13&37&117&420 \\
15 &5&16&45&159&622\\
16 &5&17&58&216&919\\
17&6&20&72&287&1323\\
18&6&22&87&375&1859\\
19&6&25&103&485&2580\\
20&8&30&125&620&3503\\
\hline
\end{tabular}

&
\begin{tabular}{|c|r|c|c|c|c|}
\hline
$n:m$ & 2& 3&4&5&6\\ 
\hline    \hline
10 &2 &4&10&20&37\\ 
11& 2 &6&13&28&63 \\ 
12 & 3 &7&16&40&103\\ 
13&3 &9&22&56&163\\
14&3 &10&27&79& 247\\
15 &5&12&35&106&370\\
16 &5&15&44&141&533\\
17&5&16&52&181&757\\
18&6&18&63&242&1058\\
19&6&20&73&308&1447\\
20&6&23&90&390&1965\\
\hline
\end{tabular}
\end{tabular}
\vspace{5pt}
\caption{Lower bounds for $P(n,m,12)$ (left) and $P(n,m,13)$ (right).}
\label{Pnm12-13}
\end{table} 

\begin{table}[htb]
\centering
\begin{tabular}{ccc}
\begin{tabular}{|c|r|c|c|c|c|}
\hline
$n:m$ & 2& 3&4&5&6\\ 
\hline    \hline
10 &2 &4&10&16&30\\ 
11& 2 &4&11&23& 51\\ 
12 & 3 &6&15&34&85\\ 
13& 3&7&18&48&133\\
14&3 &9&24&65& 203\\
15 &3&10&30&88&298\\
16 &5&13&38&118&431\\
17&5&15&46&153&609\\
18&5&16&54&197&844\\
19&6&18&63&254&1163\\
20&6&20&75&323&1568\\
\hline
\end{tabular}

&
\begin{tabular}{|c|r|c|c|c|c|}
\hline
$n:m$ & 2& 3&4&5&6\\ 
\hline    \hline
10 & 2&4&6&12&19\\ 
11& 2 &4&10&20& 31\\ 
12 & 2 &5&12&21&48\\ 
13& 3&6&15&30&72\\
14&3 &7&16&40&107\\
15 &3&9&23&52&154\\
16 &3&10&29&84&221\\
17&5&12&35&109&385\\
18&5&14&41&138
&530\\
19&5&16&41&174&720\\
20&5&17&46&220&961\\
\hline
\end{tabular}
\end{tabular}
\vspace{5pt}
\caption{Lower bounds for $P(n,m,14)$ (left) and $P(n,m,15)$ (right).}
\label{Pnm14-15}
\end{table}


\begin{table}[htb]
\setlength{\tabcolsep}{2pt}
\centering
\begin{tabular}{|c|c|c|c|}
\hline
0 1 2 4 8 3 7 5 6
&
0 1 2 7 8 5 3 4 6
&
0 1 3 4 7 2 8 6 5
&
0 1 3 8 2 6 7 4 5
\\
\hline
0 1 3 8 4 6 5 7 2
&
0 1 4 5 6 7 3 8 2
&
0 1 4 5 8 2 7 6 3
&
0 1 6 2 3 4 7 8 5
\\
\hline
0 1 6 2 8 7 5 4 3
&
0 1 6 4 5 2 3 8 7
&
0 1 6 7 3 4 8 5 2
&
0 1 7 2 4 6 8 5 3
\\
\hline
0 1 7 4 8 3 5 2 6
&
0 1 8 5 7 4 6 3 2&&\\
\hline
\end{tabular}
\vspace{5pt}
\caption{Representatives for P(9,7).}
\label{9a7}
\end{table}
\bigskip

\begin{table}[htb]
\setlength{\tabcolsep}{2pt}
\centering
\begin{tabular}{|c|c|c|c|c|c|}
\hline
0 1 2 3 5 4&
0 1 2 4 5 3&
0 1 3 5 4 2&
0 1 5 4 2 3&
0 2 3 4 1 5&
0 2 4 5 1 3\\
\hline
0 2 5 3 4 1&
0 3 1 4 2 5&
0 3 2 5 1 4&
0 3 4 2 5 1&
0 3 5 4 1 2&
0 4 1 5 3 2\\
\hline
0 4 2 1 3 5&
0 4 5 3 2 1&
0 5 2 1 3 4&
0 5 3 1 2 4&
0 5 4 2 1 3&\\
\hline

\end{tabular}
\vspace{5pt}
\caption{Representitives for P(6,3),}
\label{6a3}
\end{table}
\bigskip

\begin{table}[htb]
\centering
\begin{tabular}{|c|c|c|c|}
\hline
0 1 2 6 5 8 7 4 3&
0 1 3 8 4 5 2 6 7&
0 1 4 6 5 3 7 2 8&
0 1 5 2 4 7 3 6 8\\
\hline

\end{tabular}
\caption{Representatives for P(9,9).}
\label{9a9}
\end{table}
\bigskip

\begin{table}[!htb]
\centering
\begin{tabular}{|c|c|c|c|}
\hline
0 1 2 3 8 4 6 5 7&
0 1 2 5 8 6 3 7 4&
0 1 4 5 2 8 6 7 3&
0 1 5 3 2 4 6 8 7\\
\hline
0 1 5 6 4 8 3 7 2&
0 1 6 4 7 2 5 8 3&
0 1 6 7 3 2 8 5 4&
0 1 8 3 6 5 7 2 4\\
\hline

\end{tabular} 

\caption{Representatives for P(9,8).}
\label{9a8}

\end{table}
\bigskip

\begin{table}[!htb]
\centering
\begin{tabular}{|c|c|c|c|c|}
\hline
0 1 3 6 5 4 2&
0 1 4 2 3 6 5&
0 1 6 2 5 4 3&
0 2 3 4 1 5 6&
0 2 3 6 5 1 4\\
\hline
0 3 4 6 1 2 5&
0 3 5 4 1 2 6&
0 4 5 6 3 1 2&
0 5 2 4 3 6 1&
0 5 3 6 1 2 4\\
\hline
0 6 3 5 4 2 1&
0 6 4 2 1 3 5&&&\\
\hline
\end{tabular}
\caption{Representatives for P(7,6).}
\label{7a6}
\end{table}
\bigskip

\begin{table}[!htb]
\centering
\begin{tabular}{|r|r|r|r|}
\hline
0 1 7 4 5 6 2 3&
0 2 1 5 3 4 6 7&
0 2 6 4 7 3 1 5&
0 3 7 5 4 2 1 6\\
\hline
0 5 4 6 7 1 2 3&
0 7 3 1 2 6 5 4&
0 7 5 4 3 6 1 2&
0 7 6 4 2 1 3 5\\
\hline
\end{tabular}
\caption{Representatives for P(8,6).}
\label{8a6}
\end{table}

\begin{table}[!htb]
\centering
\begin{tabular}{|r|r|r|r|r|}
\hline
0 2 3 6 5 4 7 1&
0 2 4 3 1 5 6 7&
0 3 2 1 6 4 7 5&
0 3 5 1 6 2 7 4&
0 5 7 2 4 6 1 3\\
\hline
0 6 3 4 5 2 1 7&
0 6 3 7 1 5 2 4&
0 6 5 4 7 3 1 2&
0 7 1 5 4 6 2 3&
0 7 3 6 4 2 1 5\\
\hline
0 7 4 1 2 6 5 3&
0 7 5 6 4 1 3 2&&&\\
\hline

\end{tabular}
\caption{Representatives for P(8,5).}
\label{8a5}
\end{table}

\begin{table}[!htb]
\centering
\begin{tabular}{|r|r|r|r|r|}
\hline
0 1 2 3 4 5 6 7&
0 1 2 5 3 6 7 4&
0 1 3 5 7 2 6 4&
0 1 5 4 3 6 2 7&
0 1 6 2 7 3 4 5\\
\hline
0 1 6 3 4 2 7 5&
0 1 6 7 4 5 2 3&
0 1 7 3 2 5 6 4&
0 1 7 5 3 2 4 6&
0 1 7 5 6 3 4 2\\
\hline
0 2 3 5 1 4 7 6&
0 2 3 5 7 6 4 1&
0 2 3 6 5 4 7 1&
0 2 4 1 6 5 7 3&
0 2 4 5 6 3 1 7\\
\hline
0 2 5 1 7 4 3 6&
0 2 5 3 4 6 7 1&
0 2 5 4 3 1 6 7&
0 2 5 6 4 1 7 3&
0 2 6 4 3 5 1 7\\
\hline
0 2 6 4 7 1 5 3&
0 3 1 5 4 7 2 6&
0 3 2 4 1 7 6 5&
0 3 2 5 4 7 1 6&
0 3 2 6 1 4 5 7\\
\hline
0 3 6 2 4 5 1 7&
0 3 7 4 5 6 2 1&
0 3 7 5 4 2 1 6&
0 4 1 6 2 3 5 7&
0 4 2 7 3 1 5 6\\
\hline
0 4 2 7 5 6 1 3&
0 4 5 6 2 1 3 7&
0 4 6 1 7 2 3 5&
0 4 6 2 5 3 7 1&
0 4 6 2 7 1 5 3\\
\hline
0 4 7 5 2 3 1 6&
0 4 7 6 3 5 2 1&
0 5 1 6 7 4 3 2&
0 5 1 7 3 6 2 4&
0 5 2 1 6 3 7 4\\
\hline
0 5 2 3 6 4 1 7&
0 5 2 6 4 3 7 1&
0 5 3 1 4 6 2 7&
0 5 3 2 6 1 7 4&
0 5 3 4 1 2 7 6\\
\hline
0 5 3 7 6 1 4 2&
0 5 4 6 2 7 1 3&
0 5 4 6 3 1 2 7&
0 5 6 3 1 2 7 4&
0 5 6 3 7 4 1 2\\
\hline
0 5 7 6 4 3 1 2&
0 6 1 5 2 3 4 7&
0 6 2 4 3 7 5 1&
0 6 3 1 7 4 5 2&
0 6 3 7 2 4 5 1\\
\hline
0 6 4 3 5 7 1 2&
0 6 5 1 7 3 2 4&
0 6 7 1 3 5 4 2&
0 6 7 5 3 2 1 4&
0 7 1 2 3 4 5 6\\
\hline
0 7 1 3 5 4 6 2&
0 7 1 4 3 6 2 5&
0 7 3 4 2 1 5 6&
0 7 3 6 1 4 2 5&
0 7 4 6 3 1 2 5\\
\hline
0 7 4 6 5 2 3 1&
0 7 5 1 2 3 6 4&&&\\
\hline

\end{tabular}
\caption{Representatives for P(8,3).}
\label{8a3}
\end{table}

\begin{table}[!htb]
\centering
\begin{tabular}{|r|r|r|r|r|}
\hline
0 1 4 5 7 6 3 2&
0 1 7 3 2 5 6 4&
0 2 1 3 7 4 5 6&
0 2 1 5 7 4 6 3&
0 2 1 6 7 5 4 3\\
\hline
0 2 3 6 1 5 4 7&
0 2 4 3 5 6 1 7&
0 2 5 3 7 4 6 1&
0 2 7 1 4 5 3 6&
0 2 7 3 1 4 6 5\\
\hline
0 2 7 3 6 5 1 4&
0 2 7 6 1 4 5 3&
0 3 2 1 5 7 4 6&
0 3 5 6 4 7 1 2&
0 3 5 7 6 1 2 4\\
\hline
0 3 6 2 5 1 7 4&
0 4 1 6 2 3 5 7&
0 4 1 7 6 2 3 5&
0 4 2 1 5 6 3 7&
0 4 2 5 7 6 3 1\\
\hline
0 4 2 7 1 5 6 3&
0 4 3 1 7 5 6 2&
0 4 3 5 6 1 7 2&
0 5 2 1 6 3 7 4&
0 5 3 2 6 1 7 4\\
\hline
0 5 3 2 7 1 4 6&
0 5 4 2 1 3 6 7&
0 5 4 7 6 2 3 1&
0 5 6 2 1 7 4 3&
0 5 6 4 1 3 2 7\\
\hline
0 5 7 1 6 4 2 3&
0 5 7 3 2 4 6 1&
0 6 1 2 4 3 5 7&
0 6 7 2 4 3 1 5&
0 7 1 2 6 3 5 4\\
\hline
0 7 2 5 1 4 6 3&
0 7 2 5 3 6 4 1&
0 7 4 3 1 5 2 6&
0 7 5 1 4 2 3 6&
0 7 5 6 2 1 4 3\\
\hline
\end{tabular}
\caption{Representatives for P(8,4).}
\label{8a4}
\end{table}

\begin{table}[!htb]
\centering
\begin{tabular}{|r|r|r|}
\hline
0 5 3 1 4 6 2 7&
0 6 1 3 2 5 7 4&
0 7 3 1 2 6 5 4\\
\hline
\end{tabular}
\caption{Representatives for P(8,7).}
\label{8a7}
\end{table}

\begin{table}[!htb]
\centering
\begin{tabular}{|c|c|c|c|c|}
\hline
0	1	2	4	3	6	5&
0	1	2	5	4	6	3&
0	1	3	2	6	5	4&
0	1	3	4	2	5	6&
0	1	4	5	6	2	3\\
\hline
0	1	5	3	6	2	4&
0	1	6	2	3	4	5&
0	1	6	5	2	4	3&
0	2	1	3	5	4	6&
0	2	3	4	1	6	5\\
\hline
0	2	3	6	5	4	1&
0	2	4	1	5	3	6&
0	2	4	6	5	1	3&
0	2	5	1	6	3	4&
0	2	5	3	4	1	6\\
\hline
0	2	5	6	4	3	1&
0	2	6	1	4	3	5&
0	3	1	5	2	4	6&
0	3	1	6	5	4	2&
0	3	2	5	1	6	4\\
\hline
0	3	2	6	1	4	5&
0	3	4	1	6	2	5&
0	3	4	2	5	1	6&
0	3	4	5	6	1	2&
0	3	6	5	4	2	1\\
\hline
0	4	2	1	6	3	5&
0	4	2	5	3	6	1&
0	4	3	1	5	2	6&
0	4	3	6	2	5	1&
0	4	5	1	2	3	6\\
\hline
0	4	6	1	3	2	5&
0	4	6	5	1	2	3&
0	5	1	2	3	4	6&
0	5	2	4	1	6	3&
0	5	3	1	4	6	2\\
\hline
0	5	3	6	2	1	4&
0	5	4	1	6	3	2&
0	5	4	3	6	2	1&
0	5	6	1	4	2	3&
0	6	1	4	2	5	3\\
\hline
0	6	1	5	3	4	2&
0	6	2	5	3	1	4&
0	6	3	1	4	2	5&
0	6	3	2	4	5	1&
0	6	3	5	1	2	4\\
\hline
0	6	4	2	3	1	5&
0	6	4	5	3	2	1&
0	6	5	2	4	1	3&&\\
\hline
\end{tabular}
\caption{Representatives for P(7,4).}
\label{7a4}
\end{table}

\begin{table}[!htb]
\centering
\begin{tabular}{|c|c|c|c|c|}
\hline
0	1	4	2	5	3	6&
0	1	4	6	3	2	5&
0	1	5	2	6	4	3&
0	2	1	3	5	4	6&
0	2	4	5	6	3	1\\
\hline
0	2	6	4	1	3	5&
0	3	1	5	6	4	2&
0	3	2	4	5	1	6&
0	3	2	6	1	4	5&
0	3	5	4	6	2	1\\
\hline
0	4	3	1	5	2	6&
0	4	3	6	2	1	5&
0	4	5	1	6	3	2&
0	5	1	3	4	2	6&
0	5	3	2	6	1	4\\
\hline
0	6	1	2	5	3	4&
0	6	5	2	4	1	3&
0	6	5	3	4	1	2&&\\
\hline
\end{tabular}
\caption{Representatives for P(7,5).}
\label{7a5}
\end{table}


\end{document}